\newtheorem{theorem}{Theorem}
\newtheorem{corollary}[]{Corollary}
\newtheorem{lemma}[]{Lemma}
\newtheorem{proposition}[]{Proposition}
\newtheorem{remark}[]{ Remark}
\renewcommand{\Im}{\mathrm{Im}}
\renewcommand{\Re}{\mathrm{Re}}
\begin{document}

\title{Stark resonances   in   2-dimensional  curved quantum waveguides. }
\author{Philippe Briet \footnote{e-mail: briet@univ-tln.fr}, \\
{\small   Aix-Marseille Universit\'{e}, CNRS, CPT UMR 7332, 13288 Marseille, France, and } \\ 
{ \small Universit\'{e} de Toulon, CNRS, CPT UMR 7332, 83957 La Garde, France}\\
and \\
Mounira Gharsalli  \footnote{e-mail: gharsallimounira@gmail.com}\\
 {\small $^2$Laboratoire EDP, LR03ES04, D\' epartement de Math\' ematiques, Facult\'e des Sciences de Tunis,} \\
{  \small Universit\' e de  Tunis El Manar, El Manar 2092 Tunis, Tunisie}}

\date{}
\maketitle

\begin{abstract}
  In this paper we study  the influence of  an electric field on a  two dimensional  waveguide. We  show that  bound states   that occur under  a   geometrical deformation  of   the guide    turn  into resonances when we apply an electric field   of  small intensity  having a nonzero component  on  the longitudinal direction of the system.

\end{abstract}

 \thanks{}

 \date{}
\noindent
\vskip 1cm

\medskip

\noindent
Keywords:  Resonance, Operator Theory,  Schr\" odinger Operators, Waveguide.
 \footnote{MSC-2010 number: 35B34,35P25, 81Q10, 82D77}

\maketitle
\section{Introduction} The study of resonances occurring  in   a quantum system  subjected to a  constant electric field is now a well-known issue among  the mathematical physics community.
In a recent past  a large amount of literature has been devoted to this problem  (see  e.g. \cite{Ha,HpSg} and references therein).  Mostly  these works are concerned with quantum systems living in the whole space $\mathbb R^n$ as e.g. atomic systems \cite {PB2,FK, He,HpVi,Sig,Wa}.
In the present paper we  would like   to address this question for an inhomogeneous quantum system consisting in a curved quantum waveguide in $\mathbb R^2$. It is known that  bound states arise in curved guides \cite{BuGeReSi,DE} and the corresponding eigenfunctions are expected to be  localized in space around  the  deformation.   Therefore,  based on  these results  the  main question   is    what happens with these  bound states   when the electric  field of small intensity  is switched on?

A first  result  is given  in \cite{Exner} where   the  electric field  is supposed to be  orthogonal to  the guide outside a bounded region. But  in   this situation  there is no Stark resonance.

Here we are focusing on a strip $  {\bf \Omega } \subset \mathbb R^2$ of constant width curved within a compact region. The electric field is chosen with a strictly positive component both on the longitudinal direction
of the left part and of the right part of the curved strip.  Roughly speaking this situation  is similar to the one of  an  atomic system interacting with an external  electric field.  Due    to the  field,  an eigenstate of the  curved waveguide  at zero field      turns into  scattering state    which   is able to  escape  at  infinity under the dynamics.    It is then  natural to expect   spectral resonances for this system.
In this work we would like to study this question in the weak field  regime.

The  resonances    are defined as the complex poles  in the second Riemann sheet of  the    meromorphic continuation  of the resolvent  associated  to the   Stark operator. We construct this extension  using   the  distortion theory \cite{AC,WH}.  Our proof of  existence of resonances borrows  elements of   strategy developed  in   \cite{PB2,He}. It is mainly based on  non-trapping estimates     of  \cite{PB2}.  For  the applicability of these techniques  to our model, the   difficulty   we have to solve    is that   the  system   has    a  bounded transverse direction. 
  
  To end this section let us mention  a still open  question   related to   this problem and that  we hope to solve  in  a future work.  We claim that our  regularity assumptions  on the curvature  imply  that  the corresponding  Stark operator  (see \eqref{ST})  has   no real eigenvalue  \cite {AvHe}. In that case the complex poles have a non zero imaginary part then they are resonances in the strict sense of the term  \cite{reed simonIV}. 
 
 Let us briefly review the content of the paper. In section 2 we describe precisely the system, assumptions and the main results. The distortion and the definition of resonances are given respectively in section 3 and 4. In section 5 we prove the existence of resonances. Finally the section 6 is devoted to get an exponential estimate on the width of resonances. Actually we show that      the  imaginary part  of  resonances  arising  in  this system  follows a type of  Oppenheimer's law \cite{Op} when the intensity of the field vanishes.


 \section{ Main results}
 
 \subsection{Setting}
 
 Before  describing   the  main results of the paper  we want to  recast the problem into a more convenient form. This allows us to state precisely our assumptions on the system.

 Consider a curved strip $ {\bf \Omega} $ in $\mathbb{R}^{2}$ of a constant width $d$  defined around a smooth reference curve $\Gamma$,  we suppose that  $ {\bf \Omega} $ is not self-intersecting.   The points ${\bf X}=(x,y)$ of $ \bf \Omega$ 	are described by the curvilinear coordinates $(s,u) \in \mathbb R \times (0,d)$, 
\begin{align}
\label{cc}
x&=a(s)-ub^{'}(s),\notag\\
y&=b(s)+ua^{'}(s),
\end{align}
 where $a, b$ are smooth functions  defining  the reference curve $ \Gamma=\{(a(s),b(s)), s\in \mathbb{R}\}$ in $\mathbb{R}^{2}$. They are supposed  to satisfy $a'(s)^{2}+b'(s)^{2}=1$.

Introduce the signed curvature $\gamma(s)$ of $\Gamma$,
\begin{equation}
\label{sc}
\gamma(s)=b'(s)a''(s)-a'(s)b''(s).
\end{equation}
 For a given curvature $\gamma$,   the functions $a$ and $b$ can be  chosen  as 
\begin{align}
\label{courbe}
a(s)&=\int_{0}^{s}\cos\alpha(t)\,dt,\;b(s)=\int_{0}^{s}\sin\alpha(t)\,dt,
\end{align}
where
 $\alpha(s_{1},s_{2})=-\int_{s_{2}}^{s_{1}}\gamma(t)\,dt$ is the angle between the tangent vectors to $\Gamma$ at the points $s_{1}$ and $s_{2}$  (See e.g. \cite{DE} or \cite{Exner} for more details).
 Set $\alpha(s)= \alpha(s,0), s \in \mathbb R$ and  $\alpha_{0}=\alpha (s_{0})$.
 We choose $\gamma$  with  a compact support, ${\rm supp}(\gamma)=[0,s_{0}]$ for some $s_{0}> 0$. In particular for $s<0$ the strip is straight, parallel  to the $x-$axis. Assume also that 
  \begin{itemize}
  \item [(h1)] $\gamma \in \mathrm{C}^{2}(\mathbb R)$,
   \item [(h2)] $d\|\gamma\|_{\infty}< 1$.
   \end{itemize}
   Evidently this implies that $\gamma$ has a   continuous and bounded derivative up to second order.
 
 Let ${\bf F}=F(\cos(\eta), \sin(\eta))$ be  the electric field.   In this work   the intensity of the field $F>0$ is the free parameter   and   the direction $\eta$  is fixed. It satisfies   
 \begin{itemize}
  \item [(h3)] $ |\eta|<\frac{\pi}{2}$  and $|\eta-\alpha_{0}|<\frac{\pi}{2}$.
 \end{itemize}
 See the  Remark  \ref{rm1} below for a discussion about assumptions on $\eta$.  We consider   the Stark effect Hamiltonian on $L^{2}(\bf \Omega)$,
\begin{equation}
\label{ST}
{ \bf H} (F)=-\Delta_{ {\bf \Omega} }+ {\bf F}\cdot {\bf X },\;  F>0,
\end{equation}
with Dirichlet boundary conditions on $ \partial {\bf \Omega} $, the boundary of $ {\bf \Omega} $. 
One can check  that under (h1) and (h2), then by using natural curvilinear coordinates, ${ \bf H} (F)$ is unitarily equivalent to the Schr\"{o}dinger operator defined by

   \begin{equation}
\label{hamilt}
H(F)=H_{0}(F)+ V_{0},\;\;H_{0}(F)= H_0  + W(F), \;\;  H_0 =T_s +T_u 
\end{equation}
on the Hilbert space $L^{2}(\Omega)$,  $\Omega= \mathbb{R}\times (0,d)$  with Dirichlet boundary conditions on $ \partial \Omega = \mathbb R \times \{0, d\}$. Here
\begin{equation}
T_{s} = -\partial_{s}g\partial_{s},\;  g=  g(s,u) =(1+u \gamma(s))^{-2}, \; T_u = -\partial_{u}^{2}
\end{equation}
 and $W(F) $ is the operator multiplication by the function,

\begin{equation}
W(F,s,u) =\left\{
\begin{array}{ll}
F(\cos(\eta) s  + \sin(\eta) u) & \hbox{if $ s < 0 $} \\\\
F (\int_{0}^{s}\cos(\eta-\alpha(t))\,dt+ \sin(\eta-\alpha(s))u)&\hbox{if $0\leq s\leq s_{0}$}\\\\
F(\cos(\eta-\alpha_{0})(s-s_{0}) + A + \sin(\eta-\alpha_{0})u)&\hbox{if $s > s_{0}$}\\\\
\end{array}
\right.
\end{equation}
where $A= \int_{0}^{s_{0}}\cos(\eta-\alpha(t))\,dt$ 

and

\begin{equation}
V_{0}(s,u)=-\frac{\gamma(s)^{2}}{4(1+u\gamma(s))^{2}}+\frac{u\gamma''(s)}{2(1+u\gamma(s))^{3}}-\frac{5}{4}\frac{u^{2}\gamma'(s)^{2}}{(1+u\gamma(s))^{4}}.
\end{equation}

Denote by $ H= H_0 + V_0$.  This is  the hamiltonian   associated with the   guide  in absence of  electric field. If (h1) and (h2) are satisfied, then   $H$ is   a
self-adjoint operator  on $ L^2(\Omega)$ with domain $D(H)$ coinciding  with the one of $H_0$ and  \cite{Ka}
\begin{equation} \label {DH}
 D(H)  = D(H_0)=  \{ \varphi  \in {\cal H}_0^1 ( \Omega), \; H_0\varphi \in L^2(\Omega)\}
 \end{equation}
 In this paper we use standard notation from Sobolev space theory. The essential spectrum of $H$,  $ \sigma_{ess} (H)= [\lambda_0, + \infty)$ where $\lambda_0$ is the first transverse mode of the system i.e
 the first eigenvalue of the operator $ T_u $ on $L^2(0,d)$ with  Dirichlet  boundary conditions at $\{0,d\}$.
 
   Moreover  under our assumptions the operator $H$ has at least one  discrete eigenvalue below the essential spectrum (see  \cite{DE}). Although we do not know the discrete spectrum of $H$
our study below   works  even in the case where $H$ has infinitely many distinct discrete eigenvalues (possibly degenerate)   which can accumulate at
 the threshold $\lambda_0$.

\begin{remark}  \label{rm1} The situation  where $  \eta =  \frac {\pi}{2}$  and $ \alpha_0 = 0$ has been  considered in \cite{Exner}, but in that case  there is no  Stark resonance.   It is also true if we suppose  $ \vert   \eta \vert  \geq   \frac {\pi}{2}$  and    $\vert  \eta  - \alpha_0 \vert< \frac {\pi}{2}$ since    $W(F)$   is   now a confining potential.  Note that    the regime  $ \vert \eta \vert > \frac {\pi}{2} $   and $ \vert  \eta  - \alpha_0 \vert>\frac {\pi}{2}$  is  a symmetric case   of the  one considered  in this paper and can be studied   in the same way.  
While   for $ \vert   \eta \vert  <   \frac {\pi}{2}$ and    $\vert  \eta  - \alpha_0 \vert> \frac {\pi}{2}$,  the situation is  quite different  since   $W(F) \to -\infty$  at both $ s \to \pm \infty$    the "escape" region  corresponding to any negative energy    contains   $  \{ s <-a, u \in (0,d)\}  \cup  \{ s >a, u \in (0,d)\}$ for some $a>0$ and large.  This needs  slight modifications of our method.   It    is actually    studying  in \cite{Ga}.

\end{remark} 


\subsection{Results}

In this section we give the main results of the paper.  Some minor points  will be specified later in the text. 

First we   need to  define  rigorously the  Stark Hamiltonian   associated to our system. 
 Here we adopt a common fact  in the  literature    about Stark operators  i.e.  $H(F)$ is well  defined as an essentially self-adjoint operator  on $L^2(\Omega)$ \cite{CFKS, Ka, reed simon}.
  In the appendix of the paper  where  the proof of the theorem below is proved,  we 
 give a core  for $H(F)$.
  \begin{theorem} \label{2.1}
 Suppose that   (h1)  and (h2)  hold, then for $F>0$,
 
(i) $ {H}(F)$ is  an essentially self-adjoint  operator on $ L^2(\Omega)$,  We will denote the closure of $ {H}(F)$ by the same symbol. 

 (ii)  The spectrum of $H(F)$, $\sigma({H}(F)) = \mathbb R $.
\end{theorem}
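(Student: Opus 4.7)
The two parts call for very different techniques, so I would handle them separately. For (i) I would establish essential self-adjointness on the dense domain $C_0^\infty(\mathbb{R}\times(0,d))$ of functions whose support stays away from $\{u=0,d\}$ (so the Dirichlet condition is respected in a strong sense), using a commutator argument of Faris--Lavine / Nelson type. The zero-field operator $H = H_0 + V_0$ is already self-adjoint under (h1)-(h2) on the domain \eqref{DH}, and $C_0^\infty(\mathbb{R}\times(0,d))$ is a core, because $V_0\in L^\infty(\Omega)$ and the metric factor $g=(1+u\gamma)^{-2}$ is bounded above and below by positive constants. The only genuine perturbation is $W(F)$, which is smooth, real valued, bounded in $u$, and at most linear in $s$. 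I would take the comparison operator $N=H+s^{2}+c$ with $c$ large enough that $N\ge 1$, check that $N$ is essentially self-adjoint on the same core (Nelson's tensor-product argument for $H$ and $s^2$, which morally commute), verify $|\langle H(F)\varphi,\varphi\rangle|\le c_1\langle N\varphi,\varphi\rangle$ from $|W(F,s,u)|\le C(1+|s|)$ and $|V_0|\le C$, and finally control the commutator $[H(F),N]$ on the core. The nontrivial piece $[T_s,s^{2}]$ reduces to first-order expressions of the form $-2sg\partial_s-2g\partial_s\cdot s-(\partial_s g)(\cdots)$, all $N$-bounded by the ellipticity of $T_s$ provided by (h2). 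This yields essential self-adjointness and identifies the advertised core.

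For (ii) I would exhibit, for every $\lambda\in\mathbb{R}$, a Weyl sequence supported in the left straight part $\{s<0\}$, where $H(F)$ reduces to the explicit separable operator
\[
H_{\rm left}\;=\;-\partial_s^{2}-\partial_u^{2}+F\cos\eta\,s+F\sin\eta\,u
\]
with Dirichlet conditions at $u\in\{0,d\}$. Let $\phi_{1}\in H_0^{1}(0,d)$ be any normalised Dirichlet eigenfunction of $-\partial_u^{2}+F\sin\eta\,u$ with eigenvalue $\mu_1(F)$. Fix $\lambda\in\mathbb{R}$, set $E=\lambda-\mu_1(F)$, and form a truncated Airy function
\[
\psi_R(s)\;=\;\chi_R(s)\,\mathrm{Ai}\bigl((F\cos\eta)^{1/3}(s-E/(F\cos\eta))\bigr),
\]
where $\chi_R\in C_0^{\infty}(\mathbb{R})$ is supported in $[-R,-1]$ and equals $1$ on $[-R+1,-2]$. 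The oscillatory asymptotics $\mathrm{Ai}(t)\sim|t|^{-1/4}\cos(\tfrac{2}{3}|t|^{3/2}+\pi/4)$ as $t\to-\infty$ give $\|\psi_R\|_{2}^{2}=O(R^{1/2})$, whereas $(-\partial_s^{2}+F\cos\eta\,s-E)\psi_R$ is supported where $\chi_R^{(k)}\neq 0$ ($k\ge 1$) and has $O(1)$ $L^{2}$-norm. Thus $\Psi_R:=\psi_R\phi_1/\|\psi_R\|_{2}$ is a Weyl sequence for $H_{\rm left}$ at $\lambda$, and for $R$ large its support lies in $\{s<0\}$, where $H(F)\equiv H_{\rm left}$. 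Hence it is a Weyl sequence for $H(F)$ at $\lambda$, giving $\mathbb{R}\subset\sigma(H(F))$; the reverse inclusion is automatic from self-adjointness of $H(F)$.

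The main obstruction is in (i): one must juggle the Stark-type perturbation $W(F)$ (unbounded in $s$) together with the $(s,u)$-dependent metric of $T_s$, which forces a careful choice of comparison operator $N$ and a delicate commutator bookkeeping on the chosen core. Part (ii), by contrast, is essentially a one-dimensional Weyl-sequence construction for a 1D Stark operator tensored with a transverse eigenfunction; the curved region is invisible because the approximate eigenfunctions can be pushed arbitrarily far to the left via the size of $R$ (and, if needed, via an additional translation in $s$ that only shifts the energy parameter).
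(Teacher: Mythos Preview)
Your strategy for part~(i) is essentially the paper's: both use a Nelson/Faris--Lavine commutator argument with a comparison operator of the form $N=H_0+s^2$-type confinement. The one soft spot is your justification that $N=H+s^2+c$ is essentially self-adjoint on $C_0^\infty(\mathbb{R}\times(0,d))$: the phrase ``Nelson's tensor-product argument for $H$ and $s^2$, which morally commute'' does not apply, since $T_s=-\partial_s g\partial_s$ and $s^2$ do \emph{not} commute. The paper closes this by showing $N$ has compact resolvent (Section~\ref{sec}) and hence a complete set of eigenvectors lying in the Schwartz-type core~\eqref{Co}. This is a fixable gap, not a structural one.

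Part~(ii), however, contains a genuine error. With your cutoff $\chi_R$ supported on $[-R,-1]$ and equal to $1$ on $[-R+1,-2]$, the commutator error $-2\chi_R'\,\mathrm{Ai}'-\chi_R''\,\mathrm{Ai}$ is \emph{not} $O(1)$ in $L^2$. On the far transition interval $[-R,-R+1]$ one has $|\chi_R'|=O(1)$ but, by the very asymptotics you quote, $|\mathrm{Ai}'|\sim R^{1/4}$ there; this gives a contribution of order $R^{1/4}$ to the $L^2$-norm of the error. Since $\|\psi_R\|\sim R^{1/4}$ as well, the normalised quantity $\|(H(F)-\lambda)\Psi_R\|$ stays bounded away from zero and your sequence is not a Weyl sequence. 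A translation in $s$ alone cannot repair this: the trouble is the \emph{width} of the transition region, not its location.

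The paper avoids this by letting the cutoff slide to $-\infty$ \emph{and} widen simultaneously: with $\xi_n(s)=\xi(s/n^{\alpha}+n)$, $\tfrac12<\alpha<1$, the support sits near $s\sim -n^{1+\alpha}$, has length $\sim n^\alpha$, and $|\xi_n'|=O(n^{-\alpha})$. The extra factor $n^{-\alpha}$ from the cutoff derivative then beats the $|\varphi'|\sim |s|^{1/4}$ growth, and the ratio tends to zero precisely for $\alpha>1/2$. The minimal fix to your argument is to let the transition intervals have length $R^\beta$ for some $\beta>0$ (so $|\chi_R'|=O(R^{-\beta})$), or equivalently to adopt the paper's sliding--dilating cutoff.
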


  We  now are focusing on  the second main result and its proof.  For any  subset   $\cal D$ of $\mathbb C$, 
  denote by ${\cal D}^- = \{ z \in {\cal D} , \Im z \leq 0\}$.
 
 \begin{theorem}  \label{t0} Suppose  that    (h1), (h2) and (h3)  hold. Let  $E_{0}$ be  an  discrete eigenvalue   of $H $ of  finite multiplicity $j \in \mathbb N$.  There exits $F_c >0 $, a  F-independent complex neighbourhood $\nu_{E_0} $ of  the semi axis  $ (- \infty, E_0 + \frac{1}{8} (\lambda_0- E_0)]$ and  a F-independent  dense  subset  $ \mathcal A $  of $L^2(\Omega) $such that 
 for   $ 0<F \leq  F_c $ 
 
 i) the function 
$$z \in \mathbb C, \Im z >0 \to  {\cal R}_{\varphi}(z)=  \big( ( H(F)-z)^{-1}\varphi, \varphi),\;  \varphi \in \mathcal A  $$
has an meromorphic extension in $\nu_{E_0} $ through  the cut due to  the spectrum of $H(F)$. 
ii) $ \cup_{\varphi \in  \mathcal A}\{   { \rm poles \; of }  \; {\cal R}_\varphi(z) \} \cap \nu_{E_0}^-  $ contains  $j$ poles $Z_0(F),...Z_j(F)$  converging to $E_0$ when $F\to0$.

\end{theorem}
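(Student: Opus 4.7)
The plan is to combine a complex distortion in the longitudinal variable $s$ with a Kato perturbation analysis around the unperturbed eigenvalue $E_0$, following the template of \cite{PB2,He}. Because the transverse coordinate $u\in(0,d)$ is bounded and Dirichlet data at $u=0,d$ must be preserved, only the $s$-variable should be deformed. Pick a smooth function $\phi\in C^{\infty}(\mathbb{R})$ that vanishes on $[0,s_{0}]$ and satisfies $\phi'(s)=1$ for $|s|$ large, so the distortion acts as pure dilation at infinity and trivially on the curvature region. For $\theta\in\mathbb{R}$ set
\[
(U_{\theta}\psi)(s,u)=(1+\theta\phi'(s))^{1/2}\,\psi(s+\theta\phi(s),u)
\]
and $H(\theta,F)=U_{\theta}H(F)U_{\theta}^{-1}$. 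The construction of $U_\theta$ (deferred to Section 3 of the paper) will extend to complex $\theta$ with $|\Im\theta|<\theta_{0}$ as an analytic family of type (A) on a fixed core.

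The core analytic step is to locate the essential spectrum of the distorted free operator $H_{0}(\theta,F)$. Outside $[0,s_{0}]$ the waveguide is straight, so after distortion the operator decomposes (up to compactly supported terms) as a direct sum of a 1d Stark operator in $s$ and the transverse Dirichlet Laplacian $T_{u}$ on $(0,d)$. By (h3) the longitudinal component of the field is strictly positive on both $\{s<0\}$ and $\{s>s_{0}\}$, hence complex scaling rotates the spectrum of each 1d Stark piece into the lower half-plane, shifted by the transverse eigenvalues $\lambda_{k}\geq\lambda_{0}$. This produces a $F$-independent complex neighbourhood $\nu_{E_{0}}$ of $(-\infty,E_{0}+\tfrac{1}{8}(\lambda_{0}-E_{0})]$ free of $\sigma_{\mathrm{ess}}(H_{0}(\theta,F))$ for all small $F$. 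On $\nu_{E_{0}}$ one then imports the non-trapping estimates of \cite{PB2}, suitably adapted to the strip, to bound $\|(H_{0}(\theta,F)-z)^{-1}\|$ uniformly in $F\in(0,F_{c}]$; the uniformity is what allows the argument to survive the limit $F\to 0$.

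The curvature-induced potential $V_{0}$ and the residual part of $W(F)$ not captured by the free operator have compact $s$-support, so $H(\theta,F)-H_{0}(\theta,F)$ is $H_{0}(\theta,F)$-compact and analytic Fredholm theory on $\nu_{E_{0}}$ gives a meromorphic resolvent $(H(\theta,F)-z)^{-1}$ across the real cut. Taking $\mathcal{A}$ to be the dense set of vectors analytic for $U_\theta$ (e.g.\ smooth compactly supported functions in $s$), the identity
\[
\bigl((H(F)-z)^{-1}\varphi,\varphi\bigr)=\bigl((H(\theta,F)-z)^{-1}U_{\theta}\varphi,U_{\bar\theta}\varphi\bigr),\qquad \Im z>0,
\]
transfers the meromorphic continuation to $\mathcal{R}_{\varphi}$, yielding (i).

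For (ii), at $F=0$ the distorted operator $H(\theta,0)$ is unitarily equivalent to $H$ on a real neighbourhood of $E_{0}$ and retains $E_{0}$ as an isolated eigenvalue of multiplicity $j$. The Riesz projection
\[
P(\theta,F)=-\frac{1}{2\pi i}\oint_{\Gamma_{E_{0}}}(H(\theta,F)-z)^{-1}\,dz
\]
over a small circle around $E_{0}$ is well defined, of rank $j$, and norm-continuous in $F$ at $F=0$ thanks to the uniform resolvent bound and to $\|W(F)\chi\|\to 0$ on the relevant interaction zone. Its range contains exactly $j$ eigenvalues of $H(\theta,F)$ converging to $E_{0}$, giving the poles $Z_{0}(F),\dots,Z_{j}(F)\in\nu_{E_{0}}^{-}$. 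I expect the main obstacle to be the non-trapping resolvent estimate on the strip: the 1d bounds of \cite{PB2} must be made uniform in the transverse Dirichlet index $k$, and the different field directions on $\{s<0\}$ and $\{s>s_{0}\}$ force one to patch two distinct rotated spectral sectors across the curved transition region, where the separation-of-variables reduction is not exact.
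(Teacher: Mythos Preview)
Your overall architecture---complex distortion in $s$ only, relative compactness of the curvature part, analytic Fredholm, Riesz projection continuity---matches the paper. The substantive discrepancy is the \emph{choice of distortion}, and it is not cosmetic.

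You take an $F$-independent vector field with $\phi'(s)=1$ at both infinities, i.e.\ an asymptotic \emph{dilation}. The paper (Section~3) uses instead an $F$-\emph{dependent translation}: $f(s)=-\tfrac{1}{F\cos\eta}\,\Phi(s)$ with $\Phi$ a cutoff equal to $1$ for $s<E/(F\cos\eta)$ and $0$ for $s>E_+/(F\cos\eta)$, so $S_\theta$ is a pure translation by $-\theta/(F\cos\eta)$ deep in the escape region $s\to-\infty$ and the identity elsewhere. The point of this choice is that $W(F,s+\theta f(s),u)\approx F\cos\eta\,s-\theta$ there, so the Stark potential is shifted by $-\theta$ \emph{independently of $F$}. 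This is what produces the $F$-uniform non-trapping estimate (Lemma~4.1, the paper's core technical input), proved not by locating rotated essential spectrum but by a multiplier/positive-commutator argument with $\mu_\theta=1+\theta(\Phi-1)$, giving $\Im\,\mu_\theta(\widetilde H_{0,\theta}(F)-z)\mu_\theta\le -\beta\,\delta E/2+\Im\mu_\theta^2(E_-+\lambda_0-z)$. The estimates of \cite{PB2} that you want to ``import'' are written precisely for this $F$-dependent translation field; they do not transfer to a dilation.

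With your dilation the picture breaks in two places. First, on $\{s>s_0\}$ the longitudinal field component is still positive (that is what (h3) says: $\cos(\eta-\alpha_0)>0$), so $W$ is \emph{confining} there; under $s\mapsto(1+i\beta)s$ the imaginary part of the dilated potential is $+F\cos(\eta-\alpha_0)\beta s\to+\infty$, the wrong sign for the numerical-range argument you sketch. The paper avoids this entirely by distorting only on the left. Second, and more seriously, an $F$-independent dilation cannot give an $F$-uniform resolvent bound by perturbation from $F=0$: the Stark term $W(F)$ is unbounded and is not relatively bounded with small bound with respect to the dilated $H_0(\theta,0)$, so Kato stability does not apply. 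You need a mechanism that makes the effective spectral shift of order~$1$ rather than of order~$F$; the $1/F$-scaled translation does exactly that.

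Two smaller points: compactly supported smooth functions in $s$ are not analytic vectors for a complex dilation (they do not extend holomorphically); the paper takes $\mathcal A$ from Hunziker's construction \cite{WH} for the specific local distortion. And the passage from strong resolvent convergence to norm convergence of the Riesz projection is not automatic; the paper handles it via an intermediate compactness argument (Lemma~5.1 (ii)--(iii)) showing $\|K_\theta(F,z)-K(z)\|\to0$ uniformly on $\partial\mathcal D$, which your outline does not address.
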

Here    resonances    of the   stark operator  $H(F)$ are defined   as  the set \cite {reed simonIV}
$$ \cup_{\varphi \in  \mathcal A}\{ { \rm  poles \; of }  \; {\cal R}_\varphi(z) \} \cap \mathbb C^- . $$

The resonances have necessarily  a negative imaginary part.  But as it is discussed in the introduction,  a  still open question    is concerned with  the strict negativity.

We shall show below that using the distortion theory, the   resonances coincide with discrete eigenvalues 
of a non self-adjoint operator.


Finally we get the following exponential bound on the width of resonances.
\begin{theorem} \label{pc11}
Under conditions  of the Theorem \ref{t0}.  Let $E_0$  be a simple eigenvalue of $H$ and $Z_0$ the corresponding resonance  for $H(F)$ given by the Theorem \ref{t0}. Then  there exist  two constants   $ 0< c_1,c_2$ such that for $0< F \leq  F_c $,
$$  \vert \Im Z_0 \vert \leq  c_1 e^{-\frac{c_2}{F}}, \; k=1, .. ,j. $$
  \end{theorem}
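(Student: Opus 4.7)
\emph{Strategy.} My plan is to establish this Oppenheimer-type width estimate by combining the distortion representation of $Z_0(F)$ from Theorem~\ref{t0} with a weighted Agmon-type decay estimate on the resonance eigenfunction across the Stark barrier. The exponential factor $e^{-c_2/F}$ emerges because, at energy $E_0$, the classically forbidden region separating the well from the escape zone has Agmon length of order $1/F$.

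\emph{Step 1: Localizing $\Im Z_0$.} Let $\psi_F$ be a normalized eigenvector of the distorted Hamiltonian $H_\theta(F)$ associated with the eigenvalue $Z_0(F)$, with $\theta=i\beta$, $\beta>0$ fixed and small. Then
$$\Im Z_0 = \Im\langle\psi_F, H_\theta(F)\psi_F\rangle,$$
and the anti-Hermitian part of $H_\theta(F)$ is supported outside a compact longitudinal region where the distortion is trivial, so that $|\Im Z_0|$ is controlled by the norm of $\psi_F$ on this ``exterior'' set (times an $F$-independent factor).

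\emph{Step 2: Agmon weighted estimate through the barrier.} By (h3), $W(F,s)\sim F\cos\eta\cdot s$ as $s\to-\infty$ and $W(F,s)\sim F\cos(\eta-\alpha_0)\cdot s$ as $s\to+\infty$, both with positive coefficients, so the only escape direction is $s\to-\infty$. At energy $E_0$ the effective one-dimensional problem obtained by projecting on the transverse ground state (threshold $\lambda_0$) has classically forbidden region $(s_F^*,0)$ with $s_F^* = (E_0-\lambda_0)/(F\cos\eta)$ and allowed escape region $\{s<s_F^*\}$. Introduce the Agmon weight
$$\varphi_F(s) = (1-\delta)\int_s^0 \bigl(\lambda_0-E_0+F\cos\eta\cdot\sigma\bigr)_+^{1/2}\,d\sigma,\qquad s\leq 0,$$
extended by $0$ on $s\geq 0$ ($\delta>0$ small). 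A weighted Agmon identity applied to $e^{\varphi_F}\psi_F$ in the sesquilinear form of $H_\theta(F)-Z_0$, combined with the coercivity of $T_u-\lambda_0\geq 0$ and with the non-trapping resolvent bounds of Section~5 (used to absorb the commutators produced by the curvilinear metric $g(s,u)$ and by the complex dilation), yields
$$\|e^{\varphi_F}\psi_F\|_{L^2(\Omega)}\leq C$$
uniformly for $F\in(0,F_c]$. Since $\varphi_F$ is constant on $\{s\leq s_F^*\}$ equal to $\varphi_F(s_F^*)=\tfrac{2(1-\delta)}{3F\cos\eta}(\lambda_0-E_0)^{3/2}$, combining with Step~1 produces $|\Im Z_0|\leq c_1 e^{-c_2/F}$ for a suitable $c_2>0$ (explicitly, any $c_2<\tfrac{4(\lambda_0-E_0)^{3/2}}{3\cos\eta}$).

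\emph{Main obstacle.} The principal difficulty is adapting the one-dimensional Agmon scheme to a 2D waveguide where the tunneling barrier is generated by the transverse spectral gap $\lambda_0-E_0>0$ rather than by a pointwise potential barrier. One has to project carefully on the transverse ground state of $T_u$ and use the strict positivity of $T_u-\lambda_0$ on its orthogonal complement to reduce to the ``effective'' 1D problem, while the non-trivial curvilinear metric $g(s,u)=(1+u\gamma(s))^{-2}$ in $T_s$ generates cross-term commutators that must be controlled. These error terms, together with those coming from the complex dilation, are absorbed via the $F$-uniform non-trapping resolvent estimates of \cite{PB2} imported in Section~5. An additional delicate point is the placement of the distortion cutoff, which must be set so that the exterior region of Step~1 lies inside the region of effective exponential decay provided by the weight $\varphi_F$.
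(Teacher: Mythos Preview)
Your strategy is genuinely different from the paper's, and your Step~2 contains the real gap.

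The paper never runs an Agmon argument on the resonance eigenfunction of the non-self-adjoint $H_\theta(F)$. Instead it introduces an \emph{intermediate self-adjoint comparison operator} $H_1(F)=H+\chi_1 W(F)$, where $\chi_1$ cuts off to $[-\tau/F,\tau/F]$; since $\|\chi_1 W(F)\|_\infty=O(\tau)+O(F)$, $H_1(F)$ has a simple \emph{real} eigenvalue $e_0(F)$ near $E_0$ with eigenvector $\psi_0$. The decay input is a Combes--Thomas estimate (Lemma~\ref{CT}): the boosted family $H_a=e^{-ias}He^{ias}$ is analytic in $a$, which gives $e^{a|s|}\varphi_0\in L^2$ for the $F=0$ eigenvector, and then, via the resolvent of $H_{1,a}$, the \emph{$F$-uniform} bound $\|e^{a|s|}\psi_0\|\leq C$. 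One then writes $(Z_0-e_0)(P_\theta\psi_0,P_1\psi_0)=((H_\theta-H_1)P_\theta\psi_0,P_1\psi_0)$, takes imaginary parts (using $e_0\in\mathbb R$), and observes that every piece of $H_\theta-H_1=\theta F\cos\eta\,f+(1-\chi_1)W(F)+(T_{s,\theta}-T_s)$ is supported in $\{|s|\gtrsim c/F\}$. Pairing against $\psi_0$ therefore yields $O(e^{-ac/F})$. So the exponential comes from a \emph{fixed} decay rate of a \emph{self-adjoint} eigenfunction, combined with the fact that the distortion and the truncation sit at distance $\sim 1/F$; no $F$-dependent Agmon weight is needed.

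Your route instead asks for $\|e^{\varphi_F}\psi_F\|\leq C$ with $\psi_F$ the eigenfunction of the non-self-adjoint $H_\theta(F)$ and an $F$-dependent weight. This is the unproved step. The usual Agmon identity relies on positivity of the real part of the conjugated form, but here $Z_0$ is complex, $T_{s,\theta}$ is not symmetric, and the ``barrier'' $\lambda_0-E_0$ comes from the transverse operator $T_u$, not from a pointwise potential; extracting it requires a spectral projection in $u$ whose commutators with $g(s,u)$ you have not controlled. You appeal to the non-trapping bounds of Section~5, but those control $(\widetilde H_{0,\theta}(F)-z)^{-1}$ in $\nu_\theta$, not weighted commutators of $e^{\varphi_F}$ with $H_\theta(F)$; turning them into a uniform weighted bound on $\psi_F$ is precisely the hard part and is not supplied. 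The paper's comparison with the self-adjoint $H_1(F)$ is exactly the device that sidesteps this obstacle.
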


\begin{remark} Our results exhibit a critical field value $F_c$.  For the Theorem \ref{t0} i) this value is estimated explicitly (see formula \eqref{F0} below).  But this is not true for the rest of the results since our method use     certain abstract analysis  arguments which are valid for $F$ small enough.  This does not give  an explicit  critical $F_c$.

\end{remark}

\section{The distortion theory} \label{The distortion}

  In this section, by using  the distortion theory  we construct a family of non self adjoint  operators  $\{{H}_{\theta}(F), \theta \in \mathbb C,   \vert  \Im \theta \vert  < \theta_{0} \}$ for some $  \theta_0 >0$. In the next section we will see  that under conditions  the discrete spectrum of  ${H}_{\theta}(F)$ coincides with resonances of $H(F)$. We refer the reader to \cite{AC,WH,reed simon}  for basic tools of the    distortion theory.
Here we assume that (h1), (h2) and (h3) are satisfied.  To give  a sense to the construction  below we  need to consider     electric fields   of  finite  magnitude. Without  loss of generality we may suppose  in the sequel  that $0<F \leq1$.

Introduce the distortion on $\Omega$,
\begin{equation}
\label{dsit}
S_{\theta}:(s,u)\mapsto(s + \theta f(s),u)
\end{equation}
 defined from the  vector field $f=-\frac{1}{F\cos(\eta)} \Phi$ where $\Phi\in\mathrm{C}^{\infty}(\mathbb{R})$ is  as follow. Let $E< 0$, be the reference energy, 
 $ 0<\delta E< \frac {1 }{2} \min  \{ 1,\vert E \vert \}   $,  $E_{-}= E - \delta E $ and  $E_{+}=E + \delta E$. Set  $\Phi(s)= \phi [F\cos(\eta) s]$ where $\phi\in\mathrm{C}^{\infty}(\mathbb{R})$ is a non-increasing function such that
\begin{equation}
\phi(t)=1 \,\;\mbox{if}\;\, t< E, \, \, \,\phi(t)=0 \,\;\mbox{if} \,\;t>  E_{+}
\end{equation}
and satisfying $\Vert \phi^{(k)} \Vert_\infty = o((\frac {1}{\delta E})^{k}$.
Note that  for $s < \frac{E}{F\cos(\eta)}$,  $S_\theta$  coincides with  a translation  w.r.t.  the longitudinal variable $s$.

Clearly  for $k \geq 1$,   $\Vert \Phi^{(k)} \Vert_\infty \leq (\frac{F}{\delta E})^{k}$ and   $ \Vert f^{(k)} \Vert_\infty \leq \frac{F^{k-1}}{(\delta E )^{k}}$. 
For $\theta\in\mathbb{R}$, $ \vert \theta \vert < \delta E $, $S_{\theta}$ implements a family of unitary operators on $L^{2}(\Omega)$ by
\begin{equation}
\label{Steta}
U_{\theta}\psi = (1+\theta f')^{\frac{1}{2}}\psi\circ S_{\theta}.
\end{equation}
 We note that
\begin{equation}
\label{unitaire}
 H_{\theta}(F)=U_{\theta}H(F) U_{\theta}^{-1}= H_{0, \theta}(F)+V_{0}.
\end{equation}
\begin{equation} \label{HOtheta}
H_{0, \theta}(F)=T_{s,\theta} + T_{u} + W_{\theta}(F)
\end{equation}
where
\begin{equation}
\label{ts1}
T_{s,\theta}=-(1+\theta f')^{-\frac{1}{2}}\partial_{s}(1+\theta f')^{-1}g\partial_{s}(1+\theta f')^{-\frac{1}{2}},\end{equation}
\begin{equation} \label {Wtheta}
W_{\theta}(F)= W(F)\circ S_{\theta}.
\end{equation}

We  now want to extend  the definition of  ${H}_{\theta}(F)$ for complex $\theta$.
Set $ \theta_0 = \alpha \delta E$  where   $ \alpha $   is a   some  small and strictly positive  constant  which we fix 
 in the proof of the  Theorem \ref{aly ht}  below.   In fact    $\theta_0$ is  the critical value of distortion parameter. 
\begin{proposition}
\label{aly ht}
 There exists $ 0<\alpha<1/2$ independent of $E$ and $F$  such that  for $0< F< \delta E$, 
 $\{{H}_{\theta}(F),  \vert  \Im \theta \vert  < \theta_{0} \}$  is a  self-adjoint analytic family of operators (see \cite{Ka}).
\end{proposition}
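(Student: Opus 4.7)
The plan is to realize $\{H_\theta(F)\}$ as an analytic family of type (A) in the sense of Kato on the strip $|\Im\theta|<\theta_0=\alpha\delta E$, with self-adjointness on the real slice following from the unitary equivalence \eqref{unitaire}. Writing
\[
H_\theta(F)-H(F)=\bigl(T_{s,\theta}-T_s\bigr)+\bigl(W_\theta(F)-W(F)\bigr),
\]
it suffices to show that both pieces are small relative perturbations of $H(F)$, uniformly in $F\in(0,1]$.

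The key input is the scaling $\|f^{(k)}\|_\infty\leq F^{k-1}/(\delta E)^k$. For $|\theta|\leq\alpha\delta E$ with $\alpha<1$ one has $|\theta f'|\leq\alpha$, so $1+\theta f'$ stays in a fixed compact subset of the right half-plane, the multipliers $(1+\theta f')^{\pm 1/2}$ are bounded analytic functions of $\theta$ with norms uniform in $F$, and distributing the derivatives in $T_{s,\theta}=-(1+\theta f')^{-1/2}\partial_s(1+\theta f')^{-1}g\partial_s(1+\theta f')^{-1/2}$ exhibits $T_{s,\theta}-T_s$ as a differential operator of order at most two. Its principal symbol is $-\bigl((1+\theta f')^{-2}-1\bigr)g\partial_s^2$, of pointwise size $O(\alpha)$; the first- and zeroth-order remainders each pick up a factor $\theta f^{(j)}$ with $j\geq 2$ and hence are of size $O(\alpha F/\delta E)$. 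Since $g$ is bounded between positive constants thanks to (h2), $T_s$ is uniformly elliptic in $s$, and one obtains a relative bound of the form $\|(T_{s,\theta}-T_s)\psi\|\leq C\alpha\|T_s\psi\|+C'\|\psi\|$ on a core. The design of the distortion makes the potential piece essentially trivial: $\Phi$ is supported in the region $s<0$ where $W(F,s,u)$ is linear in $s$, and direct substitution gives $W_\theta(F)-W(F)=-\theta\Phi(s)$ globally, a bounded function of sup-norm at most $\alpha\delta E$. Together with the boundedness of $V_0$ under (h1), (h2), the full perturbation is $H(F)$-bounded with relative bound $O(\alpha)$, strictly less than one if $\alpha$ is fixed sufficiently small, independently of $E$ and $F$.

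Kato--Rellich then yields closedness of $H_\theta(F)$ on $D(H(F))$, and the explicit analytic dependence in $\theta$ of $(1+\theta f')^{\pm 1/2}$ and of $-\theta\Phi$ makes $\theta\mapsto H_\theta(F)\psi$ analytic for each $\psi\in D(H(F))$, so $\{H_\theta(F)\}$ is a type (A) analytic family. On the real axis $H_\theta(F)=U_\theta H(F)U_\theta^{-1}$ with $U_\theta$ unitary is self-adjoint, and by analytic continuation the family satisfies $H_\theta(F)^*=H_{\bar\theta}(F)$ throughout the strip, i.e. self-adjointness in Kato's sense. The delicate point is the bookkeeping in the expansion of $T_{s,\theta}-T_s$: one must check that each lower-order remainder inherits at least one factor $\theta f^{(k)}$ with $k\geq 2$, so that smallness of the relative bound is genuinely governed by $\alpha$ and not spoiled by terms of the form $g'\partial_s$ which survive at $\theta=0$ but must cancel against the corresponding piece of $T_s$.
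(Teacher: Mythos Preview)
Your overall strategy---realize $\{H_\theta(F)\}$ as a type (A) family by showing the perturbation $H_\theta(F)-H(F)$ is relatively bounded with bound $O(\alpha)$---is the same as the paper's, and your treatment of the potential piece $W_\theta(F)-W(F)=-\theta\Phi$ is correct and in fact cleaner than the paper's intermediate use of $w(F)$.

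The gap is in the kinetic piece. You write $\|(T_{s,\theta}-T_s)\psi\|\leq C\alpha\|T_s\psi\|+C'\|\psi\|$ and then assert that this yields an $H(F)$-relative bound of order $\alpha$. That inference requires $T_s$ (equivalently $H_0$) to be $H(F)$-bounded with an $F$-independent constant, which you have not established and which is precisely the subtle point for Stark operators: the potential $W(F)$ is unbounded below, $D(H(F))$ is not $D(H_0)\cap D(W(F))$, and the kinetic energy is \emph{not} separately controlled by the full Hamiltonian. Invoking ``uniform ellipticity of $T_s$'' does not help here, since the principal part $\partial_sG_\theta g\partial_s$ lives on $\operatorname{supp}(f')$, an interval of length $\sim\delta E/F$ that blows up as $F\to 0$, so naive localization does not give uniform constants.

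The paper circumvents this by taking as reference not $H(F)$ but the auxiliary self-adjoint operator $h=H_0+w(F)$, and showing directly that $\partial_sG_\theta\partial_s(h+i)^{-1}$ has norm $O(\alpha)$ via the resolvent identity
\[
\partial_sG_\theta\partial_s(h+i)^{-1}=\partial_sG_\theta\partial_s(H_0+i)^{-1}-\partial_sG_\theta\partial_s(H_0+i)^{-1}w(F)(h+i)^{-1},
\]
together with a Herbst-type commutator trick: write $w(F)=Fs\cdot\tfrac{w(F)}{Fs}$, use $[s,H_0]=-(g\partial_s+\partial_s g)$ to move $Fs$ through $(H_0+i)^{-1}$, and exploit that on $\operatorname{supp}(f')$ one has $F\cos(\eta)s\in[E,E_+]$, hence uniformly bounded. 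This localization-plus-commutator step is exactly what produces the $F$-independent constant multiplying $\alpha$, and it is missing from your argument.
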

\begin{proof}
An computation shows that
\begin{equation} \label{Ts2}
T_{s,\theta}= -\partial_{s}(1+\theta f')^{-2}g\partial_{s} + R_\theta,
\end{equation}
where $R_\theta=   \frac{g}{2}\frac{\theta f^{'''} }{(1+\theta f')^{3}}  - \frac{5g}{4}\frac{\theta^2 f''^{2}}{(1+\theta f')^4} $
is a bounded function. Let $ h(F)=H_0 + w(F)$ be the operator in $L^2(\Omega)$ where $ w(F) $ is the multiplication operator by
\begin{equation} \label{w}
w(F,s) =\left\{
\begin{array}{ll}
F \cos(\eta) s  & \hbox{if $ s < 0 $} \\\\
0 &\hbox{if $0\leq s\leq s_{0}$}\\\\
F \cos(\eta-\alpha_{0})s &\hbox{if $s > s_{0}$.}\\\\
\end{array} \right.
\end{equation} 
 Since $h= h(F) $ differs  from $H(F)$ by adding a bounded  symmetric operator, it   is also  a self-adjoint operator.

 We have  
\begin{equation} \label{E1}
{H}_{\theta}(F)=  h +  \partial_{s}G_{\theta} \partial_{s} +R_\theta +  W_\theta(F) -w(F) + V_0, \; G_{\theta}= g \big (\frac{2\theta f'+\theta^2 f'^{2}}{(1+\theta f')^{2}} \big).
\end{equation}
 Let us show  that for $  \vert  \theta \vert $ small enough then $D({H}_{0,\theta}(F)) = D(h)$. Through unitary  equivalence we may suppose that $\Re \theta =0$.
  In view of the perturbation theory  \cite{Ka} and  \eqref{E1}we only need to to show that $\partial_{s}G_{\theta}\partial_{s}$ is $h$-bounded with a relative bound strictly  smaller than one.
 By using  the  resolvent identity,
\begin{eqnarray}
\partial_{s}G_{\theta}\partial_{s}(h+ i)^{-1}&=&\partial_{s}G_{\theta}\partial_{s}(H_{0}+ i)^{-1} - \partial_{s}G_{\theta}\partial_{s}(H_{0}+ i)^{-1} w(F) (h+ i)^{-1}\notag\\
&=& \partial_{s}G_{\theta}\partial_{s}(H_{0}+ i)^{-1} -  \partial_{s}G_{\theta}\partial_{s} Fs (H_{0}+ i)^{-1} \frac{w(F)}{Fs}(h+i)^{-1}\notag\\
&-&  \partial_{s}G_{\theta}\partial_{s}(H_{0}+ i)^{-1}(\partial_{s}g +g \partial_{s})(H_{0}+ i)^{-1} \frac{w(F)}{s}(h+ i)^{-1}\notag.
\end{eqnarray}

We know that $D(H_{0}) \subset \mathcal H_{\rm loc}^{2}(\bar \Omega) \cap \mathcal H_{0}^{1}( \Omega) $, \cite{Ad, D, Kri}. Let $ \chi$  be a characteristic function of  
${\rm supp}(f') $.
Then by the closed graph theorem  \cite{Ka} $\partial_{s}g(H_{0}+ i)^{-1}$, $ g \partial_{s} (H_{0}+ i)^{-1}$,  $ \chi  \partial_{s}g\partial_{s}(H_{0}+ i)^{-1}$ and  $ \chi\partial_{s}g\partial_{s} Fs (H_{0}+ i)^{-1} $ are bounded operators. The multiplication operators  $\frac{ w(F)}{s}$ and  $\frac{ w(F)}{Fs}$ are  also bounded. Hence  this is  true for  the operator $\partial_{s}G_{\theta}\partial_{s}(h+ i)^{-1}$.

It is easy to check that under  conditions on parameters $\theta$ and  $ F$, 
$$ \Vert \partial_{s}G_{\theta}\partial_{s}(H_{0}+ i)^{-1} \Vert  \leq \frac{3\alpha}{(1-\alpha)^3} \big( \Vert  \chi  \partial_{s}g\partial_{s}(H_{0}+ i)^{-1}\Vert +  \Vert g\partial_{s}(H_{0}+ i)^{-1}\Vert\big) \leq  C\frac{3\alpha}{(1-\alpha)^3} $$
 for some constant $C>0$ independent of $F$ and $E$. Evidently  $ \Vert  \partial_{s}G_{\theta}\partial_{s} Fs (H_{0}+ i)^{-1} \Vert  $  satisfies a   similar estimate.  Choosing  $\alpha$  so  small such
    that  $  \Vert   \partial_{s}G_{\theta}\partial_{s}(h+ i)^{-1} \Vert <1$,   then  $  \partial_{s}G_{\theta}\partial_{s}$ is  relatively bounded to $ h$ with relative bound strictly smaller that one. Thus  the statement follows.

The proof is complete if we can show that for $\psi \in D(H_{\theta}(F))$
\begin{equation*}
\theta \in \{ \theta \in \mathbb C,  | \theta | < \theta_{0} \} \longmapsto (H_\theta(F)\psi, \psi)
\end{equation*}
is an  analytic function. But this last fact can be readily  verified by using standard arguments of measure theory  and  the explicit expression 
  \eqref{E1} (see e.g. \cite{reed simon, Ka}).

\end{proof}

\begin{remark}   
For $\theta \in \mathbb R$, $\vert  \theta \vert < \delta E $ consider   the unitary transformation on $L^2(\mathbb R)$ 
  $$u_{\theta}\psi (s)= (1+\theta f'(s))^{\frac{1}{2}}\psi(s + \theta f(s)),\; \psi \in  L^2(\mathbb R).$$
   We know from \cite{WH, reed simonIV} that  there exists      a dense subset  of analytic vectors $\psi$ associated with $ u_\theta$   in   $\vert  \theta \vert <  \frac{ \delta E }{\sqrt 2}$ i.e. $ \theta \in \mathbb R \to u_{\theta}\psi $ has an  $L^2(\mathbb R)$-analytic extension  in $\vert  \theta \vert <  \frac{ \delta E }{\sqrt 2}$. denote this set as $\mathcal{A}_1$.
  It is  shown in \cite{WH} that $\mathcal{A}_1$ is dense in $L^{2}(\mathbb R)$.
 Let $ \mathcal A  $ be the linear subspace generated by  vectors of the form $ \varphi \otimes \psi,    \varphi \in {\mathcal A}_1, \psi \in  L^2\big( (0,d ) \big)$.
 Then $ \mathcal A$ is a dense subset  of analytic vectors associated to
the transformation $U_\theta$ in $\vert \theta \vert < \theta_0$.\end{remark} 

For further developments we need to introduce the following   modified operator on $L^2(\Omega)$. Let  $s_{1}> s_{0}$, such that $\cos(\eta-\alpha_{0})(s-s_{0}) + A + \sin(\eta-\alpha_{0})u \geq 0$ for all $u \in (0,d)$. Set 
\begin{equation}
\label{htilde}
\widetilde{H}_0(F)=  H_0 + \widetilde{W}(F),
\end{equation}
where $\widetilde{W}(F)$ is a multiplication operator by 
\begin{equation}
\widetilde{W}(F,s,u)=\left\{%
\begin{array}{ll}
W(F,s,u)& \hbox{if $ s < 0 $, $s> s_{1}$} \\\\
0 &\hbox{if $0\leq s\leq s_{1}$}.\\\\
\end{array}%
\right.
\end{equation}

For $ \theta \in \mathbb R$,  $  \vert \theta \vert <  \theta_0 $,  let $\widetilde{H}_{0, \theta}(F) = U_{\theta}\widetilde H_0(F) U_{\theta}^{-1} = T_{s,\theta} + T_{u}+  \widetilde{W}_{\theta}(F) $. Then we have 

\begin{corollary}
  For $0< F< \delta E$, $ \{\tilde H_{0,\theta}(F),\, \vert  \Im \theta \vert <  \theta_0 \} $  is a self-adjoint analytic family of operators.
\end{corollary}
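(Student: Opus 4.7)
The plan is to exhibit $\widetilde H_{0,\theta}(F)$ as a bounded (and in fact $\theta$-independent) perturbation of $H_\theta(F)$, whose self-adjoint analyticity is already furnished by Proposition \ref{aly ht}. Subtracting the two defining expressions gives
$$
\widetilde H_{0,\theta}(F) - H_\theta(F) \;=\; -V_0 \;+\; \bigl(\widetilde W_\theta(F) - W_\theta(F)\bigr),
$$
so it suffices to show that the right-hand side is a bounded, symmetric (for real $\theta$), analytic-in-$\theta$ family. The term $V_0$ is real-valued, bounded under (h1)-(h2), and $\theta$-independent, so the entire task reduces to analyzing $\widetilde W_\theta - W_\theta$.

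The key observation is that $\phi(t)=0$ for $t>E_+$, and since $E_+<0$ one has $\Phi(s) = \phi(F\cos(\eta) s) = 0$ for every $s\geq 0$; consequently $f\equiv 0$ on $[0,+\infty)$, and in particular on the compact interval $[0,s_1]$ where $\widetilde W$ and $W$ differ. On that interval the distortion $S_\theta$ acts trivially, so $(\widetilde W_\theta - W_\theta)(s,u) = -W(F,s,u)$ independently of $\theta$. On the complementary region the two functions already agree: for $s<0$ both are given by the same polynomial $F(\cos(\eta) s + \sin(\eta) u)$, and for $s>s_1$ both are given by the polynomial valid on $\{s>s_0\}$, so the analytic continuations in $\theta$ coincide there as well. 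Hence $\widetilde W_\theta(F) - W_\theta(F)$ is a $\theta$-independent, bounded multiplication operator supported in the compact rectangle $[0,s_1]\times(0,d)$.

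It follows that $\widetilde H_{0,\theta}(F) = H_\theta(F) + B$ with $B = -V_0 + (\widetilde W_\theta - W_\theta)$ bounded, symmetric, and $\theta$-independent. Standard perturbation theory \cite{Ka} then propagates the self-adjoint analytic family structure from $H_\theta(F)$ to $\widetilde H_{0,\theta}(F)$: the common domain $D(h)$ is preserved, the resolvent depends analytically on $\theta$ via a Neumann-series argument, and for real $\theta$ self-adjointness follows either from Kato-Rellich or, equivalently, from the unitary equivalence $\widetilde H_{0,\theta}(F) = U_\theta \widetilde H_0(F) U_\theta^{-1}$ together with the fact that $\widetilde H_0(F)$ differs from the essentially self-adjoint $H_0(F)$ by the bounded symmetric operator $\widetilde W(F) - W(F)$. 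The only mild point to verify is the vanishing of $f$ on all of $[0,s_1]$, which is immediate from the definition of $\phi$ together with $s_1>0$; once that is in hand the argument is essentially bookkeeping, and no substantive analytic difficulty arises beyond what was handled in Proposition \ref{aly ht}.
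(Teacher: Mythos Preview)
Your proposal is correct and follows essentially the same approach as the paper: write $H_\theta(F)-\widetilde H_{0,\theta}(F)=V_0+(W_\theta(F)-\widetilde W_\theta(F))$, observe that both summands are bounded and $\theta$-independent, and invoke Proposition~\ref{aly ht}. You simply supply more detail than the paper's three-line proof, in particular the verification (via $E_+<0$ and hence $f\equiv 0$ on $[0,\infty)$) that $W_\theta-\widetilde W_\theta$ is genuinely $\theta$-independent.
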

\begin{proof}
We have
$H_{\theta}(F)-\widetilde{H}_{0,\theta}(F)  = V_{0} + W_{\theta}(F)-\widetilde{W}_{\theta}(F)$,
but
 $V_0$ as well as $W_{\theta}(F)-\widetilde{W}_{\theta}(F)$
are  bounded  and $\theta$-independent so   by  the Proposition \ref{aly ht}  the corollary follows.\end{proof}
\section{Meromorphic  extension of the resolvent.} \label{weak}

Let  $\theta= i \beta$,   we suppose that $
0<  \beta <  \theta_0 $.  Set  
 \begin{equation}
\mu_{\theta}= 1+ \theta f^{\sharp}
\end{equation}
with $f^{\sharp}= \Phi-1$  and  $\Phi$  defined in  the Section \ref{The distortion}. Then the multiplier operator by the function  $\mu_{\theta}$  defined a one to one map from 
$D(H_\theta (F))$ to $D(H_\theta (F))$.    Recall that  $\lambda_0=inf \sigma(T_{u})$  is   the first transverse mode. Let  
\begin{equation} \label{nu}
\nu_{\theta}=\{z\in \mathrm{C},\,\Im\mu_{\theta}^{2}(E_{-} + \lambda_{0}-z)< \beta \frac{\delta E}{2}\} .
\end{equation}
$\nu^c_{\theta}$ denotes  its complement in $\mathbb C$. It is easy to  see that 
  $ \nu_{\theta}$   contains a  $F$-independent complex neighbourhood of the semi axis $ (- \infty, \lambda_0 +E - \frac{3}{4} \delta E]$ denoted by $\tilde \nu_\theta$. It is  defined as 
\begin{equation} \label{nut}
\tilde \nu_\theta = \{ x \leq 0, y \geq - \frac{\beta\delta E}{2}\} \cup\{ (x  > 0, y  \geq 2 \beta x - \frac{\beta\delta E}{2}\}
\end{equation}
where $x= \Re z - \lambda_0 -E_-$ and $y=  \Im z$.

In this section our main result is the following. Let 
\begin{equation}
\label{F0} 
F_0 = 
\alpha' (\delta E)^2 \min\{1,1/d\}
\end{equation}
 where $\alpha'$ is a strictly positive  constant  independent of $E$ and $\beta$ which is determined in the proof of the Lemma  \ref{mutetaaaaa}. We have

\begin{proposition}  \label{t1}  There exits $\alpha'  >0 $  such that for all  $E <0$,  $ 0<F \leq  F_0 $, the function 
$$z \in \mathbb C, \Im z >0 \to  {\cal R}_{\varphi}(z)=  \big( ( H(F)-z)^{-1}\varphi, \varphi),\;  \varphi \in \mathcal A  $$
has an meromorphic extension in $ \cup_{0 < \beta < \theta_0}\nu_{\beta}$. 
\end{proposition}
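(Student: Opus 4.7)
The plan is a standard complex distortion argument whose main technical ingredient is a coercivity estimate for the modified operator $\tilde H_{0,\theta}(F)$. First, I would reduce $\mathcal R_\varphi$ to a matrix element of the distorted resolvent: for real $\theta$ with $|\theta|<\theta_0$, the unitarity of $U_\theta$ gives, for $\varphi\in\mathcal A$ and $\Im z>0$,
\begin{equation*}
\mathcal R_\varphi(z) = \big((H_\theta(F)-z)^{-1}U_\theta\varphi,\,U_\theta\varphi\big).
\end{equation*}
Since $\varphi$ is an analytic vector for $U_\theta$ (by the remark following Proposition~\ref{aly ht}) and $\{H_\theta(F)\}$ is a self-adjoint analytic family (Proposition~\ref{aly ht}), the right-hand side extends analytically to the strip $|\Im\theta|<\theta_0$. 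Setting $\theta=i\beta$ with $0<\beta<\theta_0$, the problem reduces to producing, for each such $\beta$, a meromorphic continuation in $z$ of the distorted resolvent $(H_{i\beta}(F)-z)^{-1}$ from the upper half-plane into $\nu_\beta$.

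The key step, and the step I expect to be the main obstacle, is a coercivity bound for $\tilde H_{0,i\beta}(F)-z$ on $\nu_\beta$. I would estimate the imaginary part of the sesquilinear form
\begin{equation*}
\big((\tilde H_{0,i\beta}(F)-z)\psi,\,\mu_{i\beta}^2\psi\big),\quad \psi\in D(\tilde H_{0,i\beta}(F)),
\end{equation*}
where the multiplier $\mu_\theta^2$ is engineered to compensate the coefficient $(1+\theta f')^{-2}$ in the principal part of $T_{s,\theta}$ (cf.~(\ref{Ts2})) and to align with the linear profile of $\tilde W_\theta(F)$. On the subregion where $\mu_{i\beta}\equiv 1$ (far-negative $s$, beyond $E/(F\cos\eta)$), the contribution of $\tilde W_\theta$ is real and bounded above by $E$; on the complementary subregion where $\mu_{i\beta}\equiv 1-i\beta$, the factor $(1-i\beta)^2$ supplies the nontrivial imaginary part that separates $\tilde W_\theta-z$ from the spectrum for $z\in\nu_\beta$. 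Combined with the transverse bound $T_u\geq\lambda_0$, this should yield
\begin{equation*}
\big|\Im\big((\tilde H_{0,i\beta}(F)-z)\psi,\,\mu_{i\beta}^2\psi\big)\big|\;\geq\; \tfrac{\beta\,\delta E}{2}\,\|\psi\|^2,\quad z\in\nu_\beta,
\end{equation*}
and hence the boundedness of $(\tilde H_{0,i\beta}(F)-z)^{-1}$ on $\nu_\beta$. The smallness condition $F\leq F_0$ with $F_0$ as in (\ref{F0}) is used to absorb the error terms from the $\mu_\theta$-transition zone, from the bounded remainder $R_\theta$ of (\ref{Ts2}) (which is $O(F/\delta E)$), and from the transverse contribution $\sin(\eta-\alpha)\,u$ of $\tilde W_\theta$ (which is $O(Fd)$ by (h2)); this calibration fixes the constant $\alpha'$ in (\ref{F0}). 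All estimates must be made uniform in $E$ and $\beta$, which is where the bookkeeping becomes delicate.

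With this coercivity bound in hand, I would pass from $\tilde H_{0,i\beta}(F)$ to $H_{i\beta}(F)$ by analytic Fredholm theory. The difference
\begin{equation*}
K:=H_{i\beta}(F)-\tilde H_{0,i\beta}(F)=V_0+W_{i\beta}(F)-\tilde W_{i\beta}(F)
\end{equation*}
is a bounded multiplication operator whose $s$-support lies in the bounded interval $[0,s_1]$ (since $\mathrm{supp}\,\gamma\subset[0,s_0]\subset[0,s_1]$ for $V_0$, and by construction of $\tilde W$ for the remainder). Combined with the embedding $D(\tilde H_{0,i\beta}(F))\subset\HH^{2}_{\mathrm{loc}}(\bar\Omega)\cap\HH^{1}_0(\Omega)$, Rellich's theorem on the bounded $s$-slab implies that $K\,(\tilde H_{0,i\beta}(F)-z)^{-1}$ is compact on $L^2(\Omega)$, analytically in $z\in\nu_\beta$. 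The analytic Fredholm alternative then yields a meromorphic inverse of $I+K(\tilde H_{0,i\beta}(F)-z)^{-1}$, and the factorization
\begin{equation*}
(H_{i\beta}(F)-z)^{-1}=\big(I+K(\tilde H_{0,i\beta}(F)-z)^{-1}\big)^{-1}(\tilde H_{0,i\beta}(F)-z)^{-1}
\end{equation*}
delivers the desired meromorphic continuation. Letting $\beta$ vary in $(0,\theta_0)$ and invoking uniqueness of analytic continuation on the overlaps of the $\nu_\beta$ (all of which contain the upper half-plane) produces the meromorphic extension of $\mathcal R_\varphi$ on $\bigcup_{0<\beta<\theta_0}\nu_\beta$.
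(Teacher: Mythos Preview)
Your proposal is correct and follows essentially the same route as the paper: reduction to the distorted resolvent via analytic vectors and Proposition~\ref{aly ht}, a $\mu_\theta$-weighted coercivity estimate for $\tilde H_{0,i\beta}(F)$ on $\nu_\beta$ (this is the content of Lemma~\ref{mutetaaaaa}), compactness of the relative perturbation, and analytic Fredholm theory.

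The only noteworthy difference is in the compactness step. You invoke local $\HH^2$-regularity of $D(\tilde H_{0,i\beta}(F))$ together with Rellich on the bounded slab $[0,s_1]\times(0,d)$; the paper instead peels off the distortion and the unbounded Stark part by relative boundedness (reducing to an auxiliary operator $\tilde h$, then to $h_0=-\partial_s^2+T_u$ via Herbst's commutator identity for $s$), and finishes with an explicit transverse-mode expansion $\sum_n \chi(-\partial_s^2+\lambda_n-z)^{-1}\otimes p_n$. Your route is shorter but tacitly uses that the graph norm of the non-self-adjoint $\tilde H_{0,i\beta}(F)$ controls the local $\HH^1$-norm; this is true here because $D(\tilde H_{0,i\beta}(F))=D(h)$ with equivalent graph norms, but it is worth making explicit. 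The paper's route is heavier, yet it also supplies the ingredient you leave implicit for the Fredholm alternative, namely $\|K_\theta(F,z)\|<1$ for $\Im z$ large (Lemma~\ref{l2}\,(ii)), which guarantees invertibility at one point of $\nu_\beta$.
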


 As a consequence of the Proposition \ref{t1}, the Theorem \ref{t0} i) is proved.
 
The proof of  the Proposition \ref{t1}  is based on the two following results.  For a given operator $O$ on $L^2(\Omega)$ we denote by $\varrho(O)$ its  the resolvent set. 
 \begin{lemma} 
\label{mutetaaaaa}   There exits $ \alpha'>0$   such that for  $ E<0$,  $0< F \leq  F_0$  and $0<  \beta <  \theta_0 $. Then 
\begin{itemize}
  \item[(i)] $\nu_{\theta} \subset \varrho(\widetilde{H}_{0,\theta}(F))$.
  \item [(ii)] $\forall z\in \nu_{\theta}$, $\|(\widetilde{H}_{0,\theta}(F)-z)^{-1}\| \leq {\rm dist}^{-1} (z, \nu^c_{\theta}).$
\end{itemize}
\end{lemma}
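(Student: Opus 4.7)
The plan is to establish the resolvent bound (ii) via a coercive quadratic-form estimate, from which (i) follows because $\nu_\theta$ is connected and contains points with $\Im z$ arbitrarily large where $\widetilde H_{0,\theta}(F) - z$ is manifestly invertible. I would target the inequality
\[
\bigl|\langle \mu_\theta^2\,(\widetilde H_{0,\theta}(F) - z)\psi,\, \psi\rangle\bigr| \;\geq\; \mathrm{dist}(z, \nu_\theta^c)\, \|\psi\|^2, \qquad \psi \in D(\widetilde H_{0,\theta}(F));
\]
combined with $\|\mu_\theta^2\|_\infty = 1 + O(\beta^2)$ and Cauchy--Schwarz, this yields (ii) at leading order in $\beta$.

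To expose the imaginary part I would split $\widetilde H_{0,\theta}(F) = T_{s,\theta} + T_u + \widetilde W_\theta(F)$. The transverse operator $T_u$ commutes with the $s$-only multiplier $\mu_\theta^2$, and the Dirichlet spectral gap gives $\langle \mu_\theta^2 T_u \psi,\psi\rangle \geq \lambda_0 \langle \mu_\theta^2\psi,\psi\rangle$ as a pointwise-in-$s$ inequality. The longitudinal operator, integrated by parts against $\mu_\theta^2$, yields a nonnegative leading quadratic form plus a commutator error proportional to $\mu'_\theta = \theta\,\Phi'$, supported on the narrow transition region $\{E/(F\cos\eta) < s < E_+/(F\cos\eta)\}$. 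A direct substitution via $S_\theta$, using the linearity of $W$ in $s$ and the explicit form of $f$, shows that $\widetilde W_\theta(F) = \widetilde W(F) - \theta\,\Phi$ globally (the correction is supported on the transition region). Combining the $-\theta\Phi$ contribution with the $\lambda_0$ shift and the $-z$ piece reassembles exactly $\Im[\mu_\theta^2(E_- + \lambda_0 - z)]$, which by definition \eqref{nu} controls $\mathrm{dist}(z, \nu_\theta^c)$.

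The remaining work is the control of three error families: (a) the bounded remainder $R_\theta$ inside $T_{s,\theta}$, with $\|R_\theta\|_\infty = O(\beta\|f'''\|_\infty + \beta^2\|f''\|_\infty^2) = O(\beta F/(\delta E)^3)$ via the derivative hypothesis $\|\phi^{(k)}\|_\infty = o((1/\delta E)^k)$; (b) the commutators between $\mu_\theta^2$ and $\partial_s$ produced by integration by parts, localized on the transition region and bounded by the same derivative estimates; and (c) the contribution of $\widetilde W$ on the transition region itself, where $\widetilde W \in [E_-, E_+]$ by construction (with an extra factor $d$ from $|u\sin\eta| \leq d$, explaining the $\min\{1, 1/d\}$ in \eqref{F0}). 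Each error is $O(F/(\delta E)^2)$ up to such a factor, and the threshold $F_0 = \alpha'(\delta E)^2 \min\{1, 1/d\}$ with $\alpha'$ small is fixed exactly so that the total error is strictly less than the slack $\beta\delta E/2$ in the definition of $\nu_\theta$. The main obstacle will be the careful bookkeeping of these errors uniformly in $z \in \nu_\theta$ and $0 < \beta < \theta_0$, so as to extract the precise constant $1$ in the resolvent bound rather than a larger multiplicative constant.
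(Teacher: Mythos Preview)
Your strategy is the same as the paper's: weight the quadratic form of $\widetilde H_{0,\theta}(F)-z$ by $\mu_\theta$, show its imaginary part is strictly negative on $\nu_\theta$, and convert this into the resolvent bound. The paper uses the two-sided conjugation $\mu_\theta(\widetilde H_{0,\theta}(F)-z)\mu_\theta$ rather than your one-sided multiplier $\mu_\theta^2(\widetilde H_{0,\theta}(F)-z)$; the two differ only by commutators of the same order as your error terms, but the conjugated form splits cleanly as $T_1(\theta)+iT_2(\theta)+\mu_\theta(T_{s,\theta}\mu_\theta)$, and the paper then cites \cite{PB2} both for the structural fact $T_2(\theta)\le 0$ and for the passage from the form inequality to the sharp bound (ii).

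Two points in your outline need tightening. First, where you say the longitudinal integration by parts yields ``a nonnegative leading quadratic form'', what is actually required is the pointwise sign $\Im\bigl[\mu_\theta^2(1+\theta f')^{-2}\bigr]\le 0$; this does hold (because $f^\sharp=\Phi-1\le 0$, $f'\ge 0$, and $|\theta f'|<1$), but it is the crux of the kinetic estimate and should be checked explicitly rather than asserted. Second, your route through Cauchy--Schwarz and $\|\mu_\theta^2\|_\infty$ yields (ii) only with constant $1+O(\beta^2)$, as you note; to obtain the exact constant $1$ you either need the conjugated formulation (so that the norm factor cancels against $\|\mu_\theta^{-1}\|$ on the other side) or the additional non-trapping arguments of \cite{PB2} that the paper invokes at the end. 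Finally, your $R_\theta$ estimate should read $O(\beta F^2/(\delta E)^3)$, not $O(\beta F/(\delta E)^3)$, since $\|f'''\|_\infty=O(F^2/(\delta E)^3)$; this is exactly the order the paper uses and is what makes the threshold $F_0\sim(\delta E)^2$ come out correctly.
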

\begin{proof}
By using a standard commutation relation  we derive from (\ref{ts1}),
\begin{equation}
\label{fqu}
\mu_{\theta}T_{s,\theta}\mu_{\theta}= T_{1}(\theta) + i T_{2}(\theta) + \mu_{\theta}(T_{s,\theta}\mu_{\theta})
\end{equation}
where $T_{1}(\theta)=-\partial_{s}\Re\{ \mu_{\theta}^{2} (1+ \theta f')^{-2}\}g\partial_{s}$, $T_{2}(\theta)=-\partial_{s}\Im\{ \mu_{\theta}^{2}(1+ \theta f')^{-2}\}g\partial_{s}$.  The operators $T_{1}(\theta)$, $T_{2}(\theta)$ are symmetric  and we know from \cite{PB2}  that $T_{2}(\theta)$ is negative. Moreover a straightforward calculation shows
\begin{equation}
\label{ts}
\Im\mu_{\theta}(T_{s,\theta}\mu_{\theta})= O\bigg ( \frac{\beta F^{2}}{(\delta E)^3}\bigg ).
\end{equation}
In the other hand,  let $z\in \nu_{\theta}$,   set $ \beta S =- \Im\mu_{\theta}^{2}(\widetilde{W}_{\theta}(F)-E_{-} ) -  \Im\mu_{\theta}(T_{s,\theta}\mu_{\theta})$ in fact
\begin{equation}
  S=   (1-\beta^{2}{f^{\sharp}}^{2})\Phi - 2f^{\sharp} \big ( \widetilde{W}(F)-E_{-}  \big) - \beta^{-1}\Im\mu_{\theta}(T_{s,\theta}\mu_{\theta}).
\end{equation}
On  ${\rm supp}(f^{\sharp}) = { \rm supp}(\Phi-1)$, we have $\cos(\eta-\alpha_{0})(s-s_{0})+ \sin(\eta-\alpha_{0})u + A \geq 0$  if $ s > s_1$, $ F
 \cos(\eta)s -E_{-}\geq \delta E$ if $s<0$ and then 
$$
F\cos(\eta) s \chi_{\{s<0\}}+ F(\cos(\eta-\alpha_{0})+ \sin(\eta-\alpha_{0})u + A)\chi_{\{s\geq s_{1}\}} - E_{-} \geq  
 \delta E \chi_{\{s<0\}} - E_{-}\chi_{\{s\geq 0\}}\geq  \delta E. \notag
$$
By using  \eqref{ts},  we get for  $0< \beta < \theta_0 $ 
$$S \geq 
\frac {1} { 2}\Phi + 2(1-\Phi)(\delta E + F u \sin\eta\chi_{\{s<0\}}) + O \bigg( \frac{F^{2}}{(\delta E)^3} \bigg).$$
Then    we can choose  $\alpha'$  so small   such that, 
$$ S \geq \frac {1} { 2}   \min \{ \frac{1}{2}, \delta E \}= \frac {\delta E} { 2}. $$ 

Further  in the quadratic form sense  on $ D({H}_{\theta}(F)) \times D({H}_{\theta}(F))$, we have 
\begin{equation}
Im\mu_{\theta}(\widetilde{H}_{0,\theta}(F)-z)\mu_{\theta}= T_2(\theta) -\beta S  + \Im\mu_{\theta}^{2} T_u+Im\mu_{\theta}^{2}(E_{-}-z).  
\end{equation}
Thus  for $0<   \beta  < \theta_0 $, $0< F  \leq    F_0 $ and    $z\in \nu_{\theta}$, since $\Im\mu_{\theta}^{2} = 2 \beta f^{\sharp}\leq 0$, 
we get
\begin{eqnarray}
Im\mu_{\theta}(\widetilde{H}_{0,\theta}(F)-z)\mu_{\theta}&\leq& -\beta \frac{\delta E}{2}+ \Im\mu_{\theta}^{2}(E_{-}+\lambda_{0}-z)<0.
\end{eqnarray}
This last estimate with together  some usual arguments for non-trapping estimates given in \cite{PB2} complete the proof of the Lemma (\ref{mutetaaaaa}).
\end{proof}


Introduce the following  operator, let  $\theta  \in \mathbb C$, $\vert \theta \vert < \theta_0 $ and $z\in\nu_{\theta}$
\begin{equation}
\label{kteta}
K_{\theta}(F,z)=(V_{0}+ W_{\theta}(F)-\widetilde{W}_{\theta}(F))(\widetilde{H}_{0,\theta}(F)-z)^{-1}.
\end{equation}

\begin{lemma} \label{l2}
\begin{itemize} In the same conditions as in  the previous lemma.
\item [(i) ] $z \in \nu_\theta \to K_{\theta}(F,z)$ is  an analytic compact  operator valued function.
\item [(ii) ] For $z\in  \nu_\theta$, $Imz> 0$ large enough, $\|K_{\theta}(F,z)\| < 1$.
\end{itemize}
\end{lemma}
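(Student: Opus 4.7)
The key structural observation is that $V_0$ and $W - \widetilde W$ are bounded multiplication operators with supports (in $s$) contained in $[0, s_0]$ and $[0, s_1]$ respectively, and moreover that $\Phi(s) = \phi[F\cos(\eta)\, s]$ vanishes on $[0, \infty)$ since $E_+ < 0$. Consequently $f \equiv 0$ on $[0, s_1]$, so $S_\theta$ restricts to the identity on a neighbourhood of the supports of $V_0$ and $W - \widetilde W$. This gives $W_\theta(F) - \widetilde W_\theta(F) = W(F) - \widetilde W(F)$ and makes the multiplier $M := V_0 + W_\theta(F) - \widetilde W_\theta(F)$ $\theta$-independent, bounded, and compactly supported in $s$. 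I would fix $\chi \in \mathrm{C}_c^\infty(\mathbb R)$ equal to $1$ on $[0, s_1]$ and factorise
$$K_\theta(F, z) = M\, \chi(s)\, (\widetilde H_{0, \theta}(F) - z)^{-1}.$$

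For (i), analyticity in $z$ is immediate from Lemma \ref{mutetaaaaa}(i), which yields $\nu_\theta \subset \varrho(\widetilde H_{0, \theta}(F))$, together with the standard analyticity of the resolvent on its resolvent set. For compactness, I would exploit the fact that on $\mathrm{supp}(\chi)$ one has $f' \equiv 0$, so $T_{s, \theta}$ there reduces to the self-adjoint operator $-\partial_s g \partial_s$ with real uniformly elliptic principal symbol. Standard interior and boundary elliptic regularity then gives $(\widetilde H_{0, \theta}(F) - z)^{-1} L^2(\Omega) \subset {\cal H}^2_{\rm loc}(\bar \Omega) \cap {\cal H}_0^1(\Omega)$, so $\chi(s)(\widetilde H_{0, \theta}(F) - z)^{-1}$ maps $L^2(\Omega)$ boundedly into ${\cal H}^2$-functions supported in the compact cylinder $\mathrm{supp}(\chi) \times [0, d]$. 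The Rellich--Kondrachov embedding then provides compactness, and left-multiplication by the bounded operator $M$ preserves it.

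For (ii), I would invoke Lemma \ref{mutetaaaaa}(ii): $\|(\widetilde H_{0, \theta}(F) - z)^{-1}\| \leq {\rm dist}(z, \nu_\theta^c)^{-1}$. Inspecting \eqref{nu}, and using $\Re \mu_\theta^2 = 1 - \beta^2 (f^\sharp)^2 > 0$ for $\beta < \theta_0$, one checks that for $\Re z$ bounded and $\Im z \to +\infty$ the quantity ${\rm dist}(z, \nu_\theta^c)$ grows without bound (essentially like $(1 - \beta^2)\, \Im z$). Combined with $\|M\|_\infty < \infty$, this forces $\|K_\theta(F, z)\| \to 0$ as $\Im z \to + \infty$, hence $\|K_\theta(F, z)\| < 1$ eventually. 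The principal obstacle lies in the compactness step of (i), and it is resolved precisely by the vanishing of $f$ on $\mathrm{supp}(\chi)$: this strips the non-self-adjoint complex coefficients from the principal part of $\widetilde H_{0, \theta}(F)$ in the region where regularity is needed, so that classical elliptic regularity applies without modification.
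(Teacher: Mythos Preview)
Your argument is correct, and it takes a genuinely different route from the paper's proof of part (i).

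The paper proceeds by introducing a self-adjoint comparison operator $\tilde h = H_0 + \tilde w(F)$, showing that $\widetilde H_{0,\theta}(F) - \tilde h$ is $\tilde h$-bounded with relative bound strictly less than one, and thereby reducing the compactness of $K_\theta(F,z)$ to that of $V(\tilde h - z)^{-1}$. This is in turn reduced, via further resolvent identities and a Herbst-type commutator trick, to the compactness of $V(h_0 - z)^{-1}$ with $h_0 = -\partial_s^2 \otimes \mathbb I + \mathbb I \otimes T_u$, which is finally established by an explicit transverse-mode expansion $\sum_n \chi(-\partial_s^2 + \lambda_n - z)^{-1} \otimes p_n$ and the decay $\lambda_n^{-1} = O(n^{-2})$.

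Your approach bypasses all of these auxiliary operators: you exploit directly that $f$ (and hence the distortion) vanishes on a neighbourhood of $\mathrm{supp}\,M$, so that locally $\widetilde H_{0,\theta}(F)$ coincides with the real-coefficient elliptic operator $H_0 + \widetilde W(F)$, and then invoke elliptic regularity and Rellich--Kondrachov. This is shorter and conceptually cleaner, at the cost of appealing to regularity theory as a black box; the paper's route is longer but more self-contained, and its mode decomposition makes the compactness mechanism completely explicit. Note that for boundedness of $\chi(\widetilde H_{0,\theta}(F)-z)^{-1}$ into $\mathcal H^2$ you should invoke the closed graph theorem (as the paper does elsewhere), since elliptic regularity alone gives only the inclusion.

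For part (ii) your argument is essentially identical to the paper's one-line justification.
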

\begin{proof}
 By the  Lemma  \ref{mutetaaaaa}, $(i)$  follows if we show   that $K_{\theta}(F,z), z \in \nu_\theta $ are compact operators.  Set $V=V_{0}+ W_{\theta}(F)-\widetilde{W}_{\theta}(F)$.  Notice that $V$ has compact support in the longitudinal direction and it is a bounded operator.

 Introduce  the operator $  \tilde h=\tilde h(F) = H_0 + \tilde w(F)$ on $L^2(\Omega)$
where $\tilde w(F)$   is the multiplication operator by 
\begin{equation} \label{tildew}
 \tilde w(F,s) =\left\{
\begin{array}{ll}
F \cos(\eta) s  & \hbox{if $ s < 0 $} \\\\
0 &\hbox{if $0\leq s\leq s_{1}$}\\\\
F \cos(\eta-\alpha_{0})s &\hbox{if $s > s_{1}$.}\\\\
\end{array} \right.
\end{equation} 
Then 
\begin{equation} \label{E2}
\widetilde{H}_{0,\theta}(F)-  \tilde h=    \partial_{s}G_{\theta} \partial_{s} +R_\theta + \tilde W_\theta(F) -\tilde w(F), 
\end{equation}
where $R_\theta$, $G_{\theta}$ and $\widetilde{W}_{\theta}(F) $ are  defined in  the Section 3. Suppose $ \vert \theta \vert < \theta_0$, $0< F< \delta E$,  this is satisfied  under assumptions of the lemma.   Then following step by step  the proof of the Proposition\ref{aly ht},    $\widetilde{H}_{0,\theta}(F)-  \tilde h$ is $\tilde h$-bounded with  a relative bound smaller than one. Therefore, to  prove $(i) $  we are left to  show that   for  $z\in  \nu_\theta, \Im z \not=0$, $V( \tilde h-z )^{-1}$  is compact.  

Denote  by  $\mathbb I_{\cal H}$ the identity operator on  the space ${\cal H}$. Let $h_0= -\partial_s^2  \otimes \mathbb I_{L^2(0,d)} +   \mathbb I_{L^2(\mathbb R)} \otimes T_u $  and $G= g-1$, we have 
\begin{equation} \label{e1}
V(\tilde h-z)^{-1}= V (h_{0}-z)^{-1}+ V(h_{0}-z)^{-1}\big(\partial_s G \partial_s- \tilde  w(F) \big)(\tilde h-z )^{-1} \end{equation}
 Note   that by using again the Herbst's argument \cite{He}, the second term of the r.h.s of \eqref{e1}
 can be written as 
 \begin{multline} \label{e11}
  V(h_{0}-z)^{-1} \tilde  w(F)(\tilde h-z )^{-1} =V s (h_{0}-z)^{-1} \frac{\tilde w(F)}{s}(\tilde h-z )^{-1} + \\
 V(h_{0}-z)^{-1}[s,h_{0}](h_{0}-z)^{-1}\frac{\tilde w(F)}{s} (\tilde h-z)^{-1}.
\end{multline}
  In the one hand  let $\chi$  be a $ C^\infty $  characteristic function    of  $[0,s_1]$
    then $\chi(h_{0}-z)^{-1}$ is  a compact  operator. Indeed, 
  \begin{equation*}
 \chi (h_{0}-z)^{-1}=\sum_{n\geq 0} \chi (-\partial_s^2+\lambda_{n}-z)^{-1}\otimes p_{n}
 \end{equation*}
 where $\lambda_n, n \in \mathbb N$ are the eigenvalues of the operator $T_u$ (transverse modes) and 
 $p_n, n \in  \mathbb N$ the associated projectors. We know that $\chi (-\partial_s^2+ \lambda_{n}-z)^{-1}\otimes p_{n}$ is compact \cite{reed simon} and 
for large $n$,
\begin{equation} \label{Vcpc}
\|\chi (-\partial_s^2+\lambda_{n}-z)^{-1}\otimes p_{n}\|  \leq  \|(-\partial_s^2+\lambda_{n}-z)^{-1} \Vert
= O (\frac{1}{n^{2}}).
\end{equation}
Thus  $\chi (h_{0}-z)^{-1}$ is   compact   since it is  a    limit of  a sequence of compact
operators in the norm  topology. This  holds  true for operators $V  (h_{0}-z)^{-1}$  and  $V s (h_{0}-z)^{-1}$.


On the other hand   the function  $G$ has a bounded support in the longitudinal direction then the same arguments as in the proof of the Proposition \ref{aly ht} imply that the operator $\partial_s G \partial_s (\tilde h-z )^{-1} $ is bounded.  By the closed graph theorem 
$[s, h_{0}](h_{0}-z)^{-1} = 2 \partial_s  (h_{0}-z)^{-1}$ is also bounded. 

Then   by \eqref{e1} and  \eqref{e11} the statement follows.

  The assertion  $(ii)$   is a direct consequence of the Lemma \ref{mutetaaaaa} $(ii) $ and the fact that $V$ is a bounded operator. 
  \end{proof}
\subsection{Proof of the Proposition \ref{t1}} \label{4.1}

  Here we refer  e.g. to \cite{ reed simonIV}  for the reader unfamiliar with the distortion theory.

 Let $E<0$,  $ \vert \theta \vert < \theta_0$ and $0<F\leq F_0$.  By  Lemmas \ref{mutetaaaaa},  \ref{l2} and the standard  Fredholm alternative  theorem,   the operator $ \mathbb I_{L^2(\Omega)}+  K_{\theta}(F,z)$  is invertible for all    $z \in \nu_\theta \setminus \mathcal R$ where $\mathcal R$ is a discrete  set. In 
 the bounded operator sense, we have  
\begin{equation} \label{Res}
( H_\theta(F)-z)^{-1}= ( \tilde H_{0,\theta}(F)-z)^{-1} \big(\mathbb I_{L^2(\Omega)}+  K_{\theta}(F,z) \big)^{-1}.
\end{equation}
This implies that $ \nu_\theta \setminus \mathcal R \subset \rho( H_\theta(F))$.

Further   let    $ \mathcal O $ an open subset of  $\nu_\theta \setminus \mathcal R $.  For  $ \varphi  \in \mathcal A$, consider 
 the function 
  \begin{equation} \label{fz}
 z \in \mathcal O \to {\cal R}_\varphi(z)= \big(  ( H(F)-z)^{-1}\varphi ,\varphi\big).
 \end{equation}
 For $\theta \in \mathbb R$, $ \vert \theta \vert < \theta_0$, by using  the identity $U_{\theta}^{\ast}U_{\theta}= \mathbb I_{L^2(\Omega)}$ in the scalar product  of the r.h.s. of  \eqref{fz}, we have $ {\cal R}_\varphi(z)=   \big(  ( H_\theta(F)-z)^{-1}\varphi_\theta,\varphi_\theta \big)$,  $\varphi_\theta = U_{\theta}\varphi$.
 Then  together with the  Proposition \ref{aly ht},  it   holds 
 \begin{equation} \label{Rzz}
{\cal R}_\varphi(z)=   \big(  ( H_\theta(F)-z)^{-1}\varphi_\theta,\varphi_{\bar \theta} \big).
\end{equation}
in the  disk $ \{\theta \in \mathbb C,  \vert \theta \vert < \theta_0 \}$.

Fix  $ \theta= i \beta,  0< \vert \beta \vert < \theta_0$ then   ${\cal R}_\varphi$ has  an  meromorphic  extension in 
$\nu_\theta$  given by 
$$ {\cal R}_\varphi(z)=  \big(  ( \tilde H_{0,\theta}(F)-z)^{-1} \big( \mathbb I_{L^2(\Omega)} +  K_{\theta}(F,z) \big)^{-1}\varphi_\theta,\varphi_{\bar \theta} \big).$$
The poles of ${\cal R}_\varphi$ are  locally $\theta $-independent.   From \cite{WH}  and  standard arguments, these poles   are  the set  of $ z \in \mathbb \nu_\theta $ such that the equation
$ K_{\theta}(F,z) \psi = - \psi$ has non-zero solution in $L^2(\Omega)$. In view of \eqref{Res} they  are the discrete eigenvalues of the operator $H_\theta(F)$.
\qed

\section{Resonances.}

This section is devoted to the  proof  ii) of the Theorem \ref{t0}. In view of  the section \ref{4.1} It is given by the following 

\begin{proposition}
\label{pc}
Let  $E_{0}$ be  an  discrete eigenvalue   of $H $ of finite  multiplicity $j \in \mathbb N$.  There exists   $0< F'_0 \leq  F_0 $ such that  for  $0< F \leq  F'_0 $, the operator 
  $H_{\theta}(F)$, $ 0< \vert \theta \vert  < \theta_0 $ has  j eigenvalues near $E_0$  converging  to $E_{0}$ as $F\rightarrow 0$.
\end{proposition}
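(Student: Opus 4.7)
The plan is to apply the Riesz projection method. Fix $\theta = i\beta$ with $0 < \beta < \theta_0$. Using the $F$-independent neighbourhood $\tilde\nu_\theta$ from \eqref{nut} and the fact that $E_0 < \lambda_0$ is an isolated eigenvalue of $H$, I choose a small positively-oriented circle $\Gamma$ centred at $E_0$, contained in $\tilde\nu_\theta \cap \rho(H)$ and enclosing only $E_0$ among the spectrum of $H$ (adjusting the reference energy $E<0$ in the construction of the distortion so that $E_0$ lies well inside $\tilde\nu_\theta$). Define
\[
P_0 := -\frac{1}{2\pi i}\oint_\Gamma (H - z)^{-1}\,dz, \qquad P_\theta(F) := -\frac{1}{2\pi i}\oint_\Gamma (H_\theta(F) - z)^{-1}\,dz;
\]
by hypothesis $\operatorname{rank} P_0 = j$. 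The target is to show that, for $F$ small enough, $\Gamma \subset \rho(H_\theta(F))$ and $\|P_\theta(F) - P_0\| \to 0$ as $F \to 0$. This forces $\operatorname{rank} P_\theta(F) = j$ for small $F$, producing exactly $j$ eigenvalues of $H_\theta(F)$ (algebraic multiplicity) inside $\Gamma$; shrinking $\Gamma$ then delivers convergence to $E_0$.

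By the factorisation \eqref{Res} and Lemma \ref{l2}, the poles of $(H_\theta(F)-z)^{-1}$ inside $\Gamma$ coincide with the characteristic values of the analytic compact family $z \mapsto I + K_\theta(F,z)$, counted with Gohberg--Sigal algebraic multiplicity. An analogous factorisation for $H$ yields $(H - z)^{-1} = (H_0-z)^{-1}(I + K_0(z))^{-1}$ with $K_0(z) := V_0(H_0-z)^{-1}$ compact, and $E_0$ is the unique characteristic value of $I + K_0(\cdot)$ inside $\Gamma$, of multiplicity $j$. Hence it suffices to prove the norm convergence
\[
\sup_{z \in \Gamma}\,\|K_\theta(F,z) - K_0(z)\| \xrightarrow[F \to 0]{} 0,
\]
after which the Gohberg--Sigal stability of characteristic values under compact norm-continuous perturbations provides exactly $j$ roots of $I+K_\theta(F,\cdot)$ inside $\Gamma$, and in particular $\Gamma \subset \rho(H_\theta(F))$.

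To prove this convergence, I split
\[
K_\theta(F,z) - K_0(z) = \bigl(W_\theta(F) - \widetilde W_\theta(F)\bigr)(\widetilde H_{0,\theta}(F) - z)^{-1} + V_0\bigl[(\widetilde H_{0,\theta}(F) - z)^{-1} - (H_0 - z)^{-1}\bigr].
\]
The first term is $O(F)$ in norm, since $W_\theta(F)-\widetilde W_\theta(F)$ is bounded by $O(F)$ and supported in the fixed compact longitudinal interval $[0,s_1]$, while Lemma \ref{mutetaaaaa}(ii) gives a uniform resolvent bound on $\Gamma$. For the second, the geometric observation is that $V_0$ is supported in $s \in [0,s_0]$, whereas the operators $T_s - T_{s,\theta}$ and $\widetilde W_\theta(F)$, which constitute $H_0 - \widetilde H_{0,\theta}(F)$, have coefficients or support lying outside $[0,s_1]$: the distortion cutoff is located where $F\cos(\eta)s \in [E, E_+]$ (pushed to $s=-\infty$ as $F\to 0$), and $\widetilde W_\theta(F)$ lives in $\{s<0\}\cup\{s>s_1\}$. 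Using a geometric resolvent identity with a smooth cutoff $\chi$ supported in $[0, s_1]$ and equal to one on $[0,s_0]$ (so that $V_0 = V_0\chi$ and $\widetilde H_{0,\theta}(F) = H_0$ on $\operatorname{supp}\chi$ for $F$ small), one rewrites this second term in terms of the commutator $[\chi,\widetilde H_{0,\theta}(F)]$ supported near $\{0, s_1\}$; a Combes--Thomas-type exponential decay estimate for the resolvent between the support of $\chi'$ and the far-away supports of $T_s-T_{s,\theta}$ and $\widetilde W_\theta(F)$ then produces a norm that is $o_{F\to 0}(1)$.

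The main obstacle is obtaining the Combes--Thomas-type weighted resolvent bound of the form
\[
\|\chi_{[0,s_0+1]}(\widetilde H_{0,\theta}(F) - z)^{-1}\chi_{\{|s|>R\}}\| \leq C\,e^{-cR},
\]
uniform in $F$ and $z \in \Gamma$, for the non-self-adjoint operator $\widetilde H_{0,\theta}(F)$ carrying the growing potential $\widetilde W_\theta(F)$, since the classical Combes--Thomas argument is tailored to self-adjoint Schr\"odinger operators with bounded perturbations. I would establish it by conjugating $\widetilde H_{0,\theta}(F)$ with $e^{\alpha\varphi(s)}$ for a bounded Lipschitz weight $\varphi$ of slope at most one, computing the resulting commutator as a lower-order perturbation, and absorbing it into the dissipative quadratic-form inequality obtained in the proof of Lemma \ref{mutetaaaaa} via the $\mu_\theta$-conjugation; choosing $\alpha$ small but independent of $F$ yields the uniform exponential decay between separated supports and closes the argument.
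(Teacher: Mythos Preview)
Your overall architecture matches the paper's: choose a small circle $\Gamma$ around $E_0$ inside $\tilde\nu_\theta$, factor $(H_\theta(F)-z)^{-1}$ through $(I+K_\theta(F,z))^{-1}$, prove $\sup_{z\in\Gamma}\|K_\theta(F,z)-K(z)\|\to 0$ (your $K_0$ is the paper's $K$), and conclude $\|P_\theta(F)-P\|\to 0$. The paper does exactly this; it does not invoke Gohberg--Sigal explicitly but works directly with the Riesz projectors, which amounts to the same thing.

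The substantive difference is how you handle the term $V_0\bigl[(\widetilde H_{0,\theta}(F)-z)^{-1}-(H_0-z)^{-1}\bigr]$. The paper (Lemma~\ref{k2}) first proves \emph{strong} resolvent convergence on a core and then upgrades it to norm convergence on the range of $V_0$ by the observation that $(H_0-z)^{-1}\bar\chi$ is compact (transverse modes give a norm-convergent series, longitudinal cutoff gives compactness in each mode). Since compact operators convert strong convergence to norm convergence, no weighted resolvent estimate is needed. This is considerably more elementary than the Combes--Thomas route and sidesteps entirely the difficulty you flag as the ``main obstacle''.

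Your Combes--Thomas strategy, by contrast, has a gap as written. You assert exponential decay between $\mathrm{supp}\,\chi'$ and ``the far-away supports of $T_s-T_{s,\theta}$ and $\widetilde W_\theta(F)$'', but only the former support recedes (to $s\sim E/(F\cos\eta)\to-\infty$); the support of $\widetilde W_\theta(F)$ is the fixed set $\{s<0\}\cup\{s>s_1\}$, which is \emph{adjacent} to $\mathrm{supp}\,\chi'$ no matter how small $F$ is. An exponential bound $Ce^{-cR}$ with $R=\mathrm{dist}(\mathrm{supp}\,\chi',\mathrm{supp}\,\widetilde W_\theta(F))=0$ gives nothing. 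Moreover, in the second-resolvent expansion you are implicitly forming $\widetilde W_\theta(F)(H_0-z)^{-1}$, which is unbounded since $H_0$ does not control $|s|$. This is repairable---split $\widetilde W_\theta(F)$ into a near part that is $O(F)$ on a bounded set and a far part handled by decay, or reverse the resolvent identity so that $\widetilde W_\theta(F)$ hits $(\widetilde H_{0,\theta}(F)-z)^{-1}$ instead---but you have not done so, and the additional burden of establishing uniform Combes--Thomas bounds for the non-self-adjoint Stark-type operator remains. The paper's compactness-plus-strong-convergence argument avoids all of this.
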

We need first to show the  following result. For $ \Im z \not= 0$ let $ K(z) = V_0(H_{0}-z)^{-1}$  They are compact operators (see e.g. arguments developed in the Section 6). Note that formally $  K(z) =K_\theta(F=0,z)$.  We have 
\begin{lemma} \label{k2} Let  $ E<0$, $ \theta= i\beta$, $ 0< \beta < \theta_0 $. 
Let $ \kappa$ be a compact subset of  $\tilde \nu_{\theta}\cap \rho(H_{0})$, $\chi= \chi(s) \in C_{0}^{\infty}(\mathbb{R}^+)$. Then
\begin{itemize}
\item [(i) ] $\lim_{F\rightarrow 0} \|(\widetilde{H}_{0,\theta}(F)-z)^{-1} \psi- (H_{0}-z)^{-1}\psi\|=0, \;  \psi  \in L^2(\Omega)$,
\item [(ii) ] $\lim_{F\rightarrow 0} \|\chi(\widetilde{H}_{0,\theta}(F)-z)^{-1}- \chi(H_{0}-z)^{-1}\|=0$,
\item [(iii) ]$\lim_{F\rightarrow 0}  \Vert K_{\theta}(F,z)- K(z) \Vert =0$,
\end{itemize}
uniformly in $z \in \kappa$.
\end{lemma}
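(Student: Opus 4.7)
All three assertions rest on the same mechanism: as $F\to 0$, the distortion $S_\theta$ reduces to the identity on any fixed compact $s$-set, because $\mathrm{supp}(\Phi-1)$, and hence the supports of $f'$, $f''$, $R_\theta$, $G_\theta$, are contained in $\{s\leq s_E(F)\}$ with $s_E(F):=E_+/(F\cos\eta)\to-\infty$, while the amplitude of $\widetilde W_\theta(F)$ scales with $F$ on compact $s$-sets. The plan is to combine this with the $F$-uniform resolvent bound $\|(\widetilde H_{0,\theta}(F)-z)^{-1}\|\leq\mathrm{dist}^{-1}(z,\nu_\theta^c)$ from Lemma \ref{mutetaaaaa}(ii), the bound $\|R_\theta\|_\infty=O(F^2/(\delta E)^3)$, and a Combes--Thomas exponential decay estimate for $(H_0-z)^{-1}$ holding uniformly in $z\in\kappa$.

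For (i), the starting point is the second resolvent identity
\[
[(\widetilde H_{0,\theta}(F)-z)^{-1}-(H_0-z)^{-1}]\psi=(\widetilde H_{0,\theta}(F)-z)^{-1}(H_0-\widetilde H_{0,\theta}(F))(H_0-z)^{-1}\psi.
\]
By approximating a general $\psi\in L^2(\Omega)$ by vectors of the form $(H_0-z_0)\eta$ with $\eta\in C_0^\infty(\Omega)$ and $z_0\in\kappa$ fixed, it suffices to show $(\widetilde H_{0,\theta}(F)-H_0)\eta=\partial_s G_\theta\partial_s\eta+R_\theta\eta+\widetilde W_\theta(F)\eta\to 0$ in $L^2(\Omega)$. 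The first term vanishes for $F$ small enough that $\mathrm{supp}(G_\theta)$ and $\mathrm{supp}(\eta)$ are disjoint; the second is $O(F^2)$; the third equals $\widetilde W(F)\eta$ on $\mathrm{supp}(\eta)$ (where $S_\theta=\mathrm{id}$ for small $F$) and has norm $O(F)$. Uniformity in $z\in\kappa$ follows from the uniform resolvent bound, or alternatively from Vitali's theorem applied to the normal family of analytic maps $z\mapsto[(\widetilde H_{0,\theta}(F)-z)^{-1}-(H_0-z)^{-1}]\psi$.

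For (ii), the qualitative mechanism above must be made quantitative in operator norm, which is where Combes--Thomas enters. From
\[
\chi[(\widetilde H_{0,\theta}(F)-z)^{-1}-(H_0-z)^{-1}]=\chi(H_0-z)^{-1}(H_0-\widetilde H_{0,\theta}(F))(\widetilde H_{0,\theta}(F)-z)^{-1},
\]
each piece of $H_0-\widetilde H_{0,\theta}(F)=-\partial_s G_\theta\partial_s-R_\theta-\widetilde W_\theta(F)$ is handled separately. The $R_\theta$ piece is $O(F^2)$ in norm. For the $\partial_s G_\theta\partial_s$ piece, $\mathrm{supp}(\chi)\subset\mathbb R^+$ is bounded while $\mathrm{supp}(G_\theta)\subset(-\infty,s_E(F)]$ recedes linearly in $F^{-1}$, so after moving the outer $\partial_s$ onto $\chi(H_0-z)^{-1}$ by duality and inserting a characteristic function $\chi_{\{s\leq s_E(F)\}}$, Combes--Thomas yields a norm of order $e^{-c/F}$. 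The $\widetilde W_\theta(F)$ piece is split: the part growing linearly in $s$ with coefficient $F$ is absorbed by the uniform bound $\|\chi(H_0-z)^{-1}(1+|s|)\|<\infty$ coming from exponential decay of the resolvent kernel against polynomial weight, and the remaining bounded-but-tail part supported in $\{s\leq s_E(F)\}$ is again controlled by Combes--Thomas.

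Finally (iii) reduces to (ii) through
\[
K_\theta(F,z)-K(z)=V_0\bigl[(\widetilde H_{0,\theta}(F)-z)^{-1}-(H_0-z)^{-1}\bigr]+(W_\theta(F)-\widetilde W_\theta(F))(\widetilde H_{0,\theta}(F)-z)^{-1}.
\]
Indeed $V_0$ is bounded and compactly supported in $s$, so it plays the role of $\chi$ in (ii); and $W_\theta(F)-\widetilde W_\theta(F)$ is supported on $[0,s_1]$ (where $S_\theta=\mathrm{id}$ for small $F$) with sup-norm $O(F)$, making the second term $O(F)$ in operator norm. The main obstacle I anticipate is the $\partial_s G_\theta\partial_s$ estimate in (ii): one must carefully move the outer $\partial_s$ by duality, split supports, and apply Combes--Thomas uniformly in $z\in\kappa$ with a decay rate that survives the $F$-dependent separation $\mathrm{dist}(\mathrm{supp}(\chi),\mathrm{supp}(G_\theta))\sim 1/F$; the rest is essentially bookkeeping of the support-shifting and $F$-scaling already built into the construction of the distortion.
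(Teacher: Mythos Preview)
Your arguments for (i) and (iii) match the paper's. For (ii) you take a genuinely different route, and there is one point that needs more care.

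The paper does \emph{not} use Combes--Thomas for (ii). It writes the resolvent identity in the other order, $\chi[(\widetilde H_{0,\theta}-z)^{-1}-(H_0-z)^{-1}]=\chi(\widetilde H_{0,\theta}-z)^{-1}Q_\theta(F)$ with $Q_\theta(F)=(H_0-\widetilde H_{0,\theta})(H_0-z)^{-1}$, and then exploits $f\equiv 0$ on $\mathrm{supp}(\chi)\subset\mathbb R^+$ to commute $\chi$ past the distorted resolvent up to a controlled remainder. This manoeuvre places a factor $(H_0-z)^{-1}\bar\chi$ on the far left, which is \emph{compact} (cf.\ the proof of Lemma~\ref{l2}); norm convergence then follows from the \emph{strong} convergence already obtained in (i), together with a uniform-in-$F$ bound on $[T_s,\chi](\widetilde H_{0,\theta}-z)^{-1}$, i.e.\ a localized derivative estimate on $\mathrm{supp}(\chi')$ where the distortion is absent. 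The ``compact operator upgrades strong convergence to norm convergence'' trick is the whole mechanism, and it sidesteps any quantitative kernel decay.

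Your Combes--Thomas approach is viable and even gives an explicit rate, but as written it has a gap in the $\partial_s G_\theta\partial_s$ piece. After moving the \emph{left} $\partial_s$ onto $\chi(H_0-z)^{-1}$ and inserting the far cutoff, the exponential smallness of $\chi(H_0-z)^{-1}\partial_s\,\chi_{\{s\le s_E(F)\}}$ is fine; but the remaining factor $G_\theta\partial_s(\widetilde H_{0,\theta}(F)-z)^{-1}$ still carries a derivative, and you must show it is bounded \emph{uniformly in $F$} for the product to go to zero. This is not automatic, since $\widetilde H_{0,\theta}(F)$ is non-selfadjoint with an unbounded potential, and on $\mathrm{supp}(G_\theta)$ the distortion is active. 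One clean way to close it: $\partial_s G_\theta\partial_s$ is $\tilde h$-bounded with $F$-independent constants (this is exactly what the proof of Proposition~\ref{aly ht} establishes), and the same relative bound gives a uniform graph-norm equivalence $\|(\tilde h+i)\psi\|\le C(\|(\widetilde H_{0,\theta}(F)-z)\psi\|+\|\psi\|)$; combined with Lemma~\ref{mutetaaaaa}(ii) this makes $\partial_s G_\theta\partial_s(\widetilde H_{0,\theta}(F)-z)^{-1}$ uniformly bounded. You should state this step explicitly rather than fold it into ``bookkeeping''.
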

\begin{proof}
By using   the arguments of the appendix the operator $H_{0}= T_{s}+T_{u}$ on $L^{2}(\Omega)$  has a core given by  \eqref{Co} i.e.   for $z\in \rho(H_{0})$, $\mathcal C^{'}= (H_{0}-z)\mathcal C$ is dense in $L^{2}( \Omega)$.
 Let $0 < F\leq F_0$ and $z \in \kappa $.  For all $\varphi\in \mathcal C$, set $\psi = (H_{0}-z)\varphi$.  The resolvent equation implies, 
\begin{equation}
(\widetilde{H}_{0,\theta}(F)-z)^{-1}\psi - (H_{0}-z)^{-1}\psi = (\widetilde{H}_{0,\theta}(F)-z)^{-1}( T_{s}- T_{s,\theta} - \widetilde{W}_{\theta}(F))\varphi.
\end{equation}

Clearly     $\lim_{F\rightarrow 0} \|\widetilde{W}_{\theta}(F)\varphi \Vert=0$. On the other hand we have 
$$\Vert (T_{s}- T_{s,\theta})\varphi \Vert  \leq \Vert \partial_s G_\theta \partial_s\varphi  \Vert  + \Vert R_\theta \varphi \Vert $$
Where $G_\theta $ and $R_\theta$ are defined as in the Section 3. 
Evidently $\lim_{F\rightarrow 0} \Vert R_\theta \varphi \Vert =0$. Since ${ \rm supp}(G_\theta)= [\frac{E}{F\cos(\eta)}, \frac{E_+}{F\cos(\eta)} ]$ then for such a $\varphi$,   $\lim_{F\rightarrow 0}\Vert  \partial_s G_\theta \partial_s\varphi  \|=0 $. So that $\lim_{F\rightarrow 0} \|(T_{s}- T_{s,\theta})\varphi \|=0 $.\\
  In view of the Lemma \ref{mutetaaaaa}, $(\widetilde{H}_{0,\theta}(F)-z)^{-1}$ has a norm which   is uniformly bounded  w.r.t. $F$. Thus $ (i)$  is proved on  $\mathcal C^{'}$, by standard arguments then the strong convergence follows.\\
Let us show $(ii)$.  For  $z\in \kappa$ then 
\begin{equation} \label{d1}
\chi(\widetilde{H}_{0,\theta}(F)-z)^{-1}-\chi(H_{0}-z)^{-1}=\chi(\widetilde{H}_{0,\theta}(F)-z)^{-1}Q_{\theta}(F)\end{equation}
where $Q_{\theta}(F)=( T_{s}- T_{s,\theta} - \widetilde{W}_{\theta}(F))(H_{0}-z)^{-1}$.
  On  ${\rm supp}(\chi) $, $f=0$ then the  following resolvent identity holds,
\begin{equation}\label{d22222}
 \chi (\widetilde{H}_{0,\theta}(F)-z)^{-1}= (H_{0}-z)^{-1}\chi + (H_{0}-z)^{-1}([T_s,\chi] - \chi \widetilde{W}(F))(\widetilde{H}_{0,\theta}(F)-z)^{-1}.
\end{equation}
In view of \eqref{d1} and \eqref{d22222} we have to consider two terms. First 
$$ t_1 (F)= (H_{0}-z)^{-1}\chi Q_{\theta}(F)= (H_{0}-z)^{-1}\chi\widetilde{W}(F))(H_{0}-z)^{-1}$$
which  clearly converges in the norm sense to $0_{\mathcal B(L^2(\Omega))}$ as $F\to 0$ uniformly in $z \in \kappa$ and 
$$t_2(F)= (H_{0}-z)^{-1}([T_s,\chi] - \chi \widetilde{W}(F))(\widetilde{H}_{0,\theta}(F)-z)^{-1}Q_{\theta}(F).$$
Let $\bar \chi $ be the characteristic function of  $ {\rm supp}( \chi)$.
We know that the operator $ (H_{0}-z)^{-1} \bar \chi $ is compact (see e.g. the proof of the Lemma \ref{l2}) then 
to prove that $t_2(F) $ converges in the norm sense 
to $0_{\mathcal B(L^2(\Omega))}$ as $F\to 0$ uniformly in $z \in \kappa$,  it is sufficient to 
show that $([T_s,\chi] - \chi \widetilde{W}(F))(\widetilde{H}_{0,\theta}(F)-z)^{-1}Q_{\theta}(F)$ converges strongly to 
$0_{\mathcal B(L^2(\Omega))}$ as $F\to 0$ uniformly in $z \in \kappa$. But considering the proof of $(i) $
it is then sufficient to prove that the operator $([T_s,\chi] - \chi \widetilde{W}(F))(\widetilde{H}_{0,\theta}(F)-z)^{-1}$
is bounded operator and has a norm which   is uniformly bounded  w.r.t. $F$ if $F$ is small  and $z \in \kappa$.\\
Evidently  by the Lemma \ref{mutetaaaaa} this is  true for the operator $\chi \widetilde{W}(F)(\widetilde{H}_{0,\theta}(F)-z)^{-1}$. \\
We have  on $ L^2(\Omega)$, 
$$ [T_s,\chi] (\widetilde{H}_{0,\theta}(F)-z)^{-1}=
-(\chi' g\partial_s + \partial_sg\chi')(\widetilde{H}_{0,\theta}(F)-z)^{-1} = -(2\chi' g\partial_s + (g\chi')')(\widetilde{H}_{0,\theta}(F)-z)^{-1}.$$ 
 Since  the functions $g$ and $  (g\chi')'$ are bounded  and do not dependent  on $F$,   we  only have to consider the operator $\chi' g^{1/2}\partial_s (\widetilde{H}_{0,\theta}(F)-z)^{-1}$.\\
  Let $\varphi\in L^{2}(\Omega)$, $\Vert  \varphi \Vert =1$ set $ \psi = (\widetilde{H}_{0,\theta}(F)-z)^{-1}\varphi$. Integrating by part, we have
\begin{equation*}
\| \chi' g^{1/2}\partial_{s}\psi \|^2 = (-\partial_{s}(\chi' )^2g\partial_{s}\psi,\psi) \leq  (-\partial_{s}(\chi')^2g\partial_{s}\psi,\psi) + (\chi'T_u \chi'\psi,\psi).
\end{equation*}
 By using standard commutation relations,   $ \partial_{s}(\chi' )^2g\partial_{s} = \frac{1}{2} ( (\chi')^2 \partial_{s}g\partial_{s} + \partial_{s}g\partial_{s} (\chi')^2 +  \partial_{s}(g \partial_{s}( \chi')^2))$.
 Since the field $f=0$ on ${\rm supp}( \chi')$ we get,
\begin{eqnarray}  \nonumber 
 \| \chi'g^{1/2}\partial_{s}\psi \|^2 &\leq  &
\Re((\widetilde{H}_{0,\theta}(F)-z)\psi, (\chi')^2\psi) - \Re ((\widetilde{W}_{\theta}(F)-z)\psi,(\chi')^2\psi) + \frac{1}{2}(\partial_{s}(g \partial_{s}( \chi' )^2)\psi, \psi)\notag  \\
 & \leq  & \Vert ( \chi')^2\Vert_\infty  \Vert  (\widetilde{H}_{0,\theta}(F)-z)^{-1}\Vert  +
  \big( \Vert \partial_{s}(g \partial_{s}( \chi')^2)\Vert _\infty  + \\ && \Vert( \chi')^2(\widetilde{W}_{\theta}(F)-z)  \Vert_\infty \big  \Vert  (\widetilde{H}_{0,\theta}(F)-z)^{-1}\Vert^2.
\nonumber \end{eqnarray}
 The Lemma \ref{mutetaaaaa} implies that  the l.h.s. of the last inequality is bounded
 uniformly  w.r.t. $F$ if $F$ is small  and $z \in \kappa$.  
 
 Note that once the strong convergence  on $\mathcal C'$ is proved, the strong convergence on $L^2(\Omega)$ follows by using the fact that
 \begin{eqnarray}  \nonumber ([T_s,\chi] - \chi \widetilde{W}(F))(\widetilde{H}_{0,\theta}(F)-z)^{-1}Q_{\theta}(F) = \\
 ([T_s,\chi] - \chi \widetilde{W}(F)) \big( (\widetilde{H}_{0,\theta}(F)-z)^{-1} -  (H_0 -z)^{-1} \big)
 \nonumber \end{eqnarray}
 is uniformly bounded w.r.t $F$ for $F$ small and $z \in \kappa$.
Hence the proof  of $(ii)$ is done.
 
We have
\begin{eqnarray}
K_{\theta}(F,z)-K(z)&=& (V_{0}+W_\theta(F)-\widetilde{W}_\theta (F))(\widetilde{H}_{0,\theta}(F)-z)^{-1}-V_{0}(H_{0}-z)^{-1}\notag\\
&=& V_{0}((\widetilde{H}_{0,\theta}(F)-z)^{-1}-(H_{0}-z)^{-1})-(W_\theta(F)-\widetilde{W}_\theta (F))(\widetilde{H}_{0,\theta}(F)-z)^{-1}.\notag
\end{eqnarray}
Clearly   in the norm sense $(W_\theta(F)-\widetilde{W}_\theta (F))(\widetilde{H}_{0,\theta}(F)-z)^{-1}\rightarrow  0_{\mathcal B(L^2(\Omega))}$ as $F\rightarrow 0$, uniformly  w.r.t.  $z \in \kappa$.
By applying $(ii)$   this is  also true  for  $V_{0}((\widetilde{H}_{0,\theta}(F)-z)^{-1}-(H_{0}-z)^{-1})$  as $F \to 0$.  Then 
\[\lim_{F\rightarrow 0}\|K_{\theta}(F,z)-K(z)\|=0.\]
uniformly  w.r.t.  $z \in \kappa$.
\end{proof}
\subsection {Proof of the Proposition \ref{pc}}

 Let $E_0 $ be an eigenvalue of   the operator $H$.   Recall that  $\lambda_0 = \inf\sigma_{ess}(H)$. Choose the reference energy, $E$ so that    $E_-=  E_0-\lambda_0 = E - \delta E$ and $ \delta E = \frac { \vert  E \vert}{2}$.

Let $ 0< \vert \theta  \vert  < \theta_0, \Im \theta = \beta >0$. Suppose $R>0$ is such that the  complex disk, $\mathcal{D}=\{ z \in \mathbb C, |z-E_{0}|\leq R\}   \subset \tilde \nu_\theta$ and $\mathcal{D}\cap \sigma (H)=\{E_{0}\}$.\\
First, we show that  for  $F$ small enough,  $z\in\partial\mathcal{D}$,    $(H_{\theta}(F)-z)^{-1} $ exists. Clearly $ H $ has no spectrum  in $\partial\mathcal{D}$  then  in view of the identity 
$$ (H-z)^{-1} = (H_0-z)^{-1} (\mathbb I_{L^2(\Omega)} +K(z))^{-1}, \; z \in \rho(H)\cap\rho(H_0),$$ 
the operator $(\mathbb I_{L^2(\Omega)} +K(z))^{-1}$ is well defined on $\partial\mathcal{D}$ and its norm is uniformly bounded w.r.t. $z \in  \partial\mathcal{D}$.

We have
\begin{equation} \label{1+k}
\mathbb I_{L^2(\Omega)}+ K_{\theta}(F,z)=\bigg(\mathbb I_{L^2(\Omega)} +(K_{\theta}(F,z)-K(z))(\mathbb I_{L^2(\Omega)} +K(z))^{-1}\bigg)(\mathbb I_{L^2(\Omega)} +K(z)).
\end{equation}
Since by the Lemma \ref {k2} $(iii)$, $\|K_{\theta}(F,z)-K(z)\|\rightarrow 0$ as $F\rightarrow 0$ uniformly for $ z\in\partial\mathcal{D}$,  then for $F$ small enough and $z \in \partial\mathcal{D}$
$$\|(\mathbb I_{L^2(\Omega)} +K(z))^{-1}(K_{\theta}(F,z)-K(z))\| < 1. $$
 and   $\mathbb I_{L^2(\Omega)} + (K_{\theta}(F,z)-K(z))(\mathbb I_{L^2(\Omega)}+K(z))^{-1}$ is invertible.
Hence for $F$ small enough  $\mathbb I_{L^2(\Omega)} + K_{\theta}(F,z)$ is invertible for $z\in\partial\mathcal{D}$ and  from \eqref{Res}, $(H_{\theta}(F)-z)^{-1}$ is well defined on the contour $\partial \mathcal{D}$.  We define  the spectral projector associated with $H_\theta (F)$,
\begin{equation} \label{Pt}
P_{\theta}(F)=\frac{1}{2i\pi}\oint_{\partial\mathcal{D}}(H_{\theta}(F)-z)^{-1}\,dz.
\end{equation}
The algebraic multiplicity of the eigenvalues of $H_{\theta}(F)$ inside $\mathcal{D}$ is just the dimension of $P_{\theta}(F)$.
In the same way let 
\begin{equation*}
P=\frac{1}{2i\pi}\oint_{\partial\mathcal{D}}(H-z)^{-1}\,dz
\end{equation*}
 be the spectral projector associated with $H$. Thus  to prove the  first part of the proposition, it is sufficient to  show that for $F$ small enough, $\|P_{\theta}(F)-P\|<1$. We have 
\begin{eqnarray}  \label{DR}
(H_{\theta}(F)-z)^{-1}&=& (\widetilde{H}_{0,\theta}(F)-z)^{-1}(\mathbb I_{L^2(\Omega)} +K_{\theta}(F,z))^{-1}   \\
&=&(\widetilde{H}_{0,\theta}(F)-z)^{-1}-(\widetilde{H}_{0,\theta}(F)-z)^{-1}K_{\theta}(F,z)(\mathbb I_{L^2(\Omega)} +K_{\theta}(F,z))^{-1}  \nonumber
\end{eqnarray}
 and similarly
  \begin{equation*}
(H-z)^{-1}=(H_{0}-z)^{-1}-(H_{0}-z)^{-1}K(z)(\mathbb I_{L^2(\Omega)} +K(z))^{-1}.
\end{equation*}
By the Lemma \ref{mutetaaaaa} the operator $\widetilde{H}_{0,\theta}(F)$  has no spectrum inside $\mathcal D$  this is also true for $H_0$  then  $\oint_{\partial\mathcal{D}}(H_{0}-z)^{-1}\,dz=\oint_{\partial\mathcal{D}}(\tilde H_{0,\theta}(F)-z)^{-1}\,dz=0$.
Hence, we get 
\begin{eqnarray}
P_{\theta}(F)-P=&\frac{1}{2i\pi} \oint_{\partial\mathcal{D}}((H_{0}-z)^{-1}K(z)(\mathbb I_{L^2(\Omega)} +K(z))^{-1} -  \label{p1}\\
 &\widetilde{H}_{0,\theta}(F)-z)^{-1}K_{\theta}(F,z)(\mathbb I +K_{\theta}(F,z))^{-1} \,dz. \nonumber
\end{eqnarray}
Set  $ \Delta K =  K(z)-K_\theta(F,z) $, $ \Delta R=  (H_{0}-z)^{-1}-(\widetilde{H}_{0,\theta}(F)-z)^{-1}$, we have the following identity,
\begin{eqnarray*}
  (H_{0}-z)^{-1}K(z)(\mathbb I_{L^2(\Omega)} +K(z))^{-1} -(\widetilde{H}_{0,\theta}(F)-z)^{-1}K_{\theta}(F,z)(\mathbb I_{L^2(\Omega)} +K_{\theta}(F,z))^{-1}&= \\
 \Delta R K(z)(\mathbb I_{L^2(\Omega)} +K(z))^{-1} + 
 (\widetilde{H}_{0,\theta}(F)-z)^{-1}(\mathbb I_{L^2(\Omega)} +K_{\theta}(F,z))^{-1}\Delta K (\mathbb I_{L^2(\Omega)}+K(z))^{-1}.
\end{eqnarray*}
By applying the Lemma  \ref {k2} then  in the norm operator sense $  \Delta R K(z){\rightarrow}
0_{{\mathcal B } (L^2(\Omega))}$ and  $\Delta K  {\rightarrow}
0_{{\mathcal B } (L^2(\Omega))}$
as $F\rightarrow 0$ uniformly in $z \in \partial\mathcal{D}$. Moreover  the operators $(\mathbb I_{L^2(\Omega)} +K(z))^{-1},
(\mathbb I_{L^2(\Omega)} +K_{\theta}(F,z))^{-1}$ and $(\widetilde{H}_{0,\theta}(F)-z)^{-1}$ are uniformly bounded w.r.t.
 $z \in \partial\mathcal{D}$ and  $F$  for $F$ small. This implies
  \begin{equation} \label{dP}
   \lim_{F\to 0} \|P_{\theta}(F)-P\| =0.
    \end{equation}
The second part of the proposition follows from the fact that   the radius of $\mathcal D$ can be chosen  arbitrarily small, this shows that  the eigenvalues of 
$H_{\theta}(F)$  inside $\mathcal D$ converge to $E_0$  as $F \to 0$. 

\qed

\section{Exponential estimates}

In this section we show that the width of resonances  given in the Proposition
\ref {pc}   decays exponentially  when the intensity of the field $F\to 0$. Hence we prove the Theorem \ref{pc11}.

Let  $E_{0}$ be  an  simple eigenvalue   of $H $.  For $0< F \leq  F'_0 $,  let $Z_0$ be an eigenvalue of  the operator $H_{\theta}(F)$ in a small complex  neighborhood of $E_{0}$
given by the Proposition \ref{pc}. Then 

\begin{proposition} \label{pc1}
Under conditions  of the Theorem \ref{pc},  there exists $ 0 < F''_0 \leq F'_0 $ and two constants   $ 0< c_1,c_2$ such that for $0< F \leq  F''_0 $,
$$  \vert \Im Z_0 \vert \leq  c_1 e^{-\frac{c_2}{F}} $$
  \end{proposition}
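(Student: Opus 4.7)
The plan is to combine two ingredients: an Agmon-type exponential decay bound for the unperturbed eigenfunction $\psi_0$ of $H$ associated with $E_0$, and the observation that the non-self-adjoint part of $H_\theta(F)$ is localized in the far-negative region $s \sim E/(F\cos\eta)$, which recedes to $-\infty$ as $F\to 0$.

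First I would set up an approximate eigenvector. Since $E_0 < \lambda_0 = \inf\sigma_{ess}(H)$, standard Combes--Thomas or Agmon arguments yield $\alpha>0$ with $e^{\alpha|s|}\psi_0 \in L^2(\Omega)$, and using the eigenvalue equation $H\psi_0 = E_0 \psi_0$ to commute the weight past the gradient, also $e^{\alpha|s|}\partial_s\psi_0 \in L^2(\Omega)$. By density we may take $\psi_0 \in \mathcal A$. Define $\varphi_\theta(F) = P_\theta(F) U_\theta \psi_0$ with $P_\theta(F)$ the spectral projector from \eqref{Pt}; then $\varphi_\theta(F)$ is a genuine eigenvector of $H_\theta(F)$ at $Z_0$, and by \eqref{dP} together with the fact that $U_\theta\psi_0 \to \psi_0$ as $F\to 0$ (the distortion $S_\theta$ is trivial on any fixed bounded set because $\mathrm{supp}(f)$ recedes to $-\infty$), we have $\varphi_\theta(F) \to \psi_0$ in $L^2(\Omega)$ and $\Vert \varphi_\theta\Vert \geq 1/2$ for $F$ small.

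Next, from the eigenvalue equation I would write the Rayleigh-type identity
\begin{equation*}
\Im Z_0 \, \Vert\varphi_\theta\Vert^2 \;=\; \Im (H_\theta(F)\varphi_\theta, \varphi_\theta) \;=\; \frac{1}{2i}\bigl((H_\theta(F)-H_{\bar\theta}(F))\varphi_\theta, \varphi_\theta\bigr).
\end{equation*}
The key structural fact is that the differential operator $H_\theta(F)-H_{\bar\theta}(F)$ has coefficients supported in $\mathrm{supp}(f)\cup\mathrm{supp}(f')$, which lies inside $\{s \leq E_+/(F\cos\eta)\}$; this can be read off directly from \eqref{Ts2}, \eqref{E1} and \eqref{Wtheta}, since $V_0$ and $T_u$ are $\theta$-independent. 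Let $\chi_F$ be a smooth cutoff equal to $1$ on this support and vanishing slightly outside. Following the relative-boundedness estimates used in the proof of Proposition \ref{aly ht}, one obtains
\begin{equation*}
\bigl\vert\bigl((H_\theta(F)-H_{\bar\theta}(F))\varphi_\theta,\varphi_\theta\bigr)\bigr\vert \;\leq\; C\bigl(\Vert \chi_F\varphi_\theta\Vert^2 + \Vert \chi_F \partial_s\varphi_\theta\Vert^2\bigr),
\end{equation*}
with $C$ independent of $F$ (the coefficients of $H_\theta-H_{\bar\theta}$ are uniformly bounded in $F$ and $\theta$).

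It remains to estimate the right-hand side. Using the $L^2$-closeness $\varphi_\theta \approx U_\theta\psi_0$, the triviality of $S_\theta$ on $\mathrm{supp}(\chi_F)$ for $F$ small, and the fact that on $\mathrm{supp}(\chi_F)$ one has $|s| \geq |E|/(F\cos\eta) - O(1)$, the weighted Agmon bound on $\psi_0$ yields
\begin{equation*}
\Vert \chi_F\psi_0\Vert^2 + \Vert \chi_F \partial_s\psi_0\Vert^2 \;\leq\; C'\, e^{-2\alpha|E|/(F\cos\eta)}.
\end{equation*}
Transferring this bound from $\psi_0$ to $\varphi_\theta$ (the main technical step: use the eigenvalue equation $H_\theta(F)\varphi_\theta = Z_0 \varphi_\theta$ together with the resolvent representation $\varphi_\theta - U_\theta\psi_0 = (P_\theta(F)-P)U_\theta\psi_0 + P(U_\theta\psi_0 - \psi_0)$ to control $\chi_F \partial_s(\varphi_\theta-U_\theta\psi_0)$ in $L^2$) yields the same exponential bound for $\varphi_\theta$. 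Combined with $\Vert\varphi_\theta\Vert \geq 1/2$, this gives $|\Im Z_0| \leq c_1 e^{-c_2/F}$ with $c_2 = 2\alpha|E|/\cos\eta$ (up to an arbitrarily small loss in the exponent).

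The hard part will be step four: turning the $L^2$ Agmon decay of $\psi_0$ into an $H^1$ bound on $\chi_F\varphi_\theta$ that is uniform in $F$. The essential difficulty is that $\varphi_\theta$ is only defined implicitly through the spectral projector, so one needs a quantitative control on $\Vert\chi_F \partial_s(\varphi_\theta - U_\theta\psi_0)\Vert$. A clean route is to use the identity $(H_\theta(F)-Z_0)(\varphi_\theta - U_\theta\psi_0) = -(H_\theta(F)-Z_0)U_\theta\psi_0$, apply $\chi_F$, and exploit the uniform resolvent bound of Lemma \ref{mutetaaaaa} together with the localization of $U_\theta$ to reduce everything to the Agmon-weighted $H^1$-estimate on $\psi_0$ itself.
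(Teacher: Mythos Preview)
Your approach---taking the exact resonance eigenvector $\varphi_\theta=P_\theta(F)\psi_0$ and reading off $\Im Z_0$ from the Rayleigh identity $\Im Z_0\,\|\varphi_\theta\|^2=\tfrac{1}{2i}\bigl((H_\theta-H_{\bar\theta})\varphi_\theta,\varphi_\theta\bigr)$---is different from the paper's. The paper introduces a \emph{self-adjoint} intermediate operator $H_1(F)=H+\chi_1W(F)$ (with $\chi_1$ a cutoff on $[-\tau/F,\tau/F]$), whose eigenvalue $e_0(F)$ is real, and then estimates $\Im(Z_0-e_0)$ through the pairing $((H_\theta-H_1)P_\theta\psi_0,\,P_1\psi_0)$. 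The crucial point of that device is that the exponential smallness is carried entirely by the \emph{self-adjoint} eigenvector $\psi_0$ of $H_1(F)$ (for which Combes--Thomas with an $F$-uniform constant is established in Lemma~\ref{CT} and its extension), while $P_\theta\psi_0$ is only required to be uniformly bounded in $H^1_{\mathrm{loc}}$. In other words, the paper never needs exponential decay of the resonance eigenfunction itself.

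Your scheme, by contrast, needs exactly that: an exponentially small bound on $\|\chi_F\varphi_\theta\|+\|\chi_F\partial_s\varphi_\theta\|$ with $\varphi_\theta$ the \emph{true} eigenvector of the non-self-adjoint $H_\theta(F)$. This is the genuine gap. Your ``clean route'' does not close it: the identity $(H_\theta-Z_0)(\varphi_\theta-\psi_0)=-(H_\theta-Z_0)\psi_0$ cannot be inverted at $Z_0$ (the operator has a kernel there), and even after projecting off the eigenspace, the right-hand side is \emph{not} exponentially small---the term $W_\theta(F)\psi_0$ contributes $O(F)$, not $O(e^{-c/F})$. Invoking Lemma~\ref{mutetaaaaa} does not help either, since that bound is for $\widetilde H_{0,\theta}(F)$, and passing from $\widetilde H_{0,\theta}$ to $H_\theta$ via $(I+K_\theta)^{-1}$ destroys any localization of the input $\psi_0$. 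What you would actually need is a Combes--Thomas or finite-propagation estimate of the form $\|\chi_F(H_\theta(F)-z)^{-1}(1-\chi_F')\|\le Ce^{-c/F}$ for $z\in\partial\mathcal D$, uniformly in $F$; this is plausible but is not available from the tools developed in the paper and you have not supplied it. A minor separate issue: you cannot simply ``take $\psi_0\in\mathcal A$'' by density, since $\psi_0$ is a fixed eigenvector; but this is easily avoided by applying $P_\theta(F)$ to $\psi_0$ directly rather than to $U_\theta\psi_0$.
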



 First we need to prove the following lemma.

\begin{lemma} \label{CT} Let $\varphi_0$ be the  eigenvector of $H$ associated with the eigenvalue $E_0$ i.e. $H \varphi_0 = E_0 \varphi_0$. Then there exist $a>0$ 
such that $ e^{a\vert s \vert} \varphi_0 \in L^2( \Omega)$.
\end{lemma}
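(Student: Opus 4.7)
The plan is to prove an Agmon-type exponential decay estimate, exploiting three facts: $E_0 < \lambda_0 = \inf\sigma_{\rm ess}(H)$, the transverse Dirichlet operator satisfies $T_u \geq \lambda_0 \mathbb{I}$, and the curvature-induced terms $V_0$ and $g-1$ are supported in the compact longitudinal set $[0,s_0]$. Outside this set the eigenvalue equation reduces to $(-\partial_s^2 + T_u)\varphi_0 = E_0 \varphi_0$, whose formal separation of variables predicts decay at any rate $a$ with $a^2 < \lambda_0 - E_0$.

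First I would fix $a>0$ with $a^2 < \lambda_0 - E_0$ and construct a family of smooth bounded weights $\{\rho_N\}_{N\in\mathbb N}$ on $\mathbb R$ with the properties: $\rho_N$ vanishes on a fixed open neighborhood of $[0,s_0]$, $|\rho_N'|\le a$ and $\rho_N''$ uniformly bounded, and $\rho_N(s)\nearrow a\,\mathrm{dist}(s,[0,s_0])$ pointwise as $N\to\infty$. Setting $\psi_N = e^{\rho_N}\varphi_0$, the boundedness of $\rho_N$ gives $\psi_N \in L^2(\Omega)$, and standard regularity of the eigenfunction together with the support properties of $\rho_N$ places $\psi_N$ in $D(H)$.

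The key step is to establish the identity
$$\langle \psi_N, (H-E_0)\psi_N\rangle = \int_\Omega g\,(\rho_N')^2 |\psi_N|^2\, ds\,du.$$
One derives it by expanding the quadratic form $\langle\psi_N,H\psi_N\rangle = \int g|\partial_s\psi_N|^2 + \int|\partial_u\psi_N|^2 + \int V_0|\psi_N|^2$, substituting $\partial_s\psi_N = e^{\rho_N}(\rho_N'\varphi_0+\partial_s\varphi_0)$, integrating by parts the cross term $\int 2g\rho_N' e^{2\rho_N}\Re(\overline{\varphi_0}\partial_s\varphi_0)$, and comparing with $E_0\|\psi_N\|^2 = \Re\int e^{2\rho_N}\overline{\varphi_0}(H\varphi_0)$; most contributions cancel and only the weighted gradient of $\rho_N$ survives. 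By construction $\mathrm{supp}(\rho_N')\subset\{g=1\}$, so the right-hand side is $\le a^2\|\psi_N\|^2$. On the other hand, using $T_u\ge\lambda_0\,\mathbb I$, the boundedness of $V_0$ and the fact that $\rho_N=0$ on $\mathrm{supp}(V_0)$ (so $\chi\psi_N=\chi\varphi_0$ for any cutoff $\chi$ of $[0,s_0]$), one gets the lower bound
$$\langle\psi_N,H\psi_N\rangle \ge \lambda_0\|\psi_N\|^2 - \|V_0\|_\infty\|\chi\varphi_0\|^2_{L^2(\Omega)}.$$
Combining, $(\lambda_0-E_0-a^2)\|\psi_N\|^2 \le \|V_0\|_\infty\|\chi\varphi_0\|^2$, a bound uniform in $N$. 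Fatou's lemma then delivers $e^{a\,\mathrm{dist}(s,[0,s_0])}\varphi_0 \in L^2(\Omega)$, which implies $e^{a'|s|}\varphi_0 \in L^2(\Omega)$ for any $0<a'<a$, yielding the claim after relabeling.

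The main obstacle I would expect is the rigorous justification of the quadratic-form manipulations and the global integration by parts on $\Omega$. This is standard thanks to the Dirichlet boundary conditions on $\partial\Omega$ (which kill boundary terms in the $u$-direction) and the support properties of $\rho_N$ (which confine all singular behavior to the region $\{g=1\}$); if desired one can first approximate $\varphi_0$ by smooth compactly supported functions in $D(H)$ and pass to the limit, or work in the form domain throughout. A secondary point is the choice of $\rho_N$: making it vanish on a neighborhood of $[0,s_0]$ decouples the Agmon computation from the geometric terms $V_0$ and $g-1$, which is what allows the identity above to close cleanly.
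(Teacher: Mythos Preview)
Your argument is correct and constitutes a genuine alternative to the paper's proof. The paper proceeds by the Combes--Thomas method: it conjugates $H$ by $e^{-ias}$ to obtain the analytic family $H_a = H - ia(\partial_s g + g\partial_s) + g a^2$, shows this is type~A, proves a numerical-range bound $\Re(H_{0,a}\varphi,\varphi)\ge \lambda_0 - \|g\|_\infty(\Im a)^2$, establishes that $V_0(H_{0,a}-z)^{-1}$ is compact so that $\sigma_{\rm ess}(H_a)\subset\{\Re z\ge \lambda_0-\|g\|_\infty(\Im a)^2\}$, and then invokes stability of discrete eigenvalues under analytic perturbation to conclude that $E_0$ remains isolated for $\|g\|_\infty(\Im a)^2<\lambda_0-E_0$, whence $e^{(\Im a)s}\varphi_0\in L^2$.

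Your Agmon-type estimate bypasses all of this machinery: no analytic family, no compactness argument, no perturbation theory---just a single quadratic-form identity and Fatou. The key simplification is your choice of weights vanishing on a neighbourhood of $[0,s_0]$, which decouples the computation from the geometric terms $V_0$ and $g-1$ and lets the identity $\langle\psi_N,(H-E_0)\psi_N\rangle=\int g(\rho_N')^2|\psi_N|^2$ close with $g\equiv 1$ on $\mathrm{supp}(\rho_N')$. As a bonus this yields the slightly sharper range $a^2<\lambda_0-E_0$ (versus $\|g\|_\infty a^2<\lambda_0-E_0$ in the paper). The paper's route, on the other hand, is more consonant with the distortion-analytic techniques used throughout the article and reuses ingredients (compactness of $V_0(\cdot)^{-1}$, type-A families) that are already in play. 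One small remark: for the form manipulations you only need $\psi_N$ in the form domain $\mathcal H_0^1(\Omega)$, not in $D(H)$; this is immediate since $e^{\rho_N}$ is smooth with bounded derivatives, so there is no need to invoke regularity of $\varphi_0$ beyond $\varphi_0\in\mathcal H_0^1(\Omega)$.
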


\proof
Here we use  the standard Combes-Thomas argument (see e.g. \cite{ reed simonIV}). 
Consider the following   unitary  transformation on $L^2(\Omega)$. Let $a \in \mathbb R $, for all $\varphi \in L^2(\Omega)$, set 
$$ W_a(\varphi)(s,u)  = e^{-ias}\varphi(s,u).$$
We have 
$$H_a = W_a HW_a^{-1 } = H -ia( \partial_sg + g \partial_s) + ga^2. $$
The   family  of operators $\{H_a, a \in \mathbb C\} $ is an  entire family  of type A. Indeed it is easy to check that  $ D(H_a) = D(H)$, $\forall a \in \mathbb C$. This follows from  the fact that $ \forall z \in \mathbb C, \Im z \not= 0$,
$$  \Vert g^{1/2} \partial_s ( H-z)^{-1} \Vert  \leq    \Vert ( H-z)^{-1} \Vert  + ( \Vert V_0\Vert_\infty + \vert z \vert )\Vert ( H-z)^{-1} \Vert^2.  $$
Thus, for  a suitable choice of $z$,  the r.h.s of this last inequality is  arbitrarily   small. This implies $\partial_sg + g \partial_s$  is $H$-bounded with  zero relative bound. 

Further  let  $\Re a=0$.    Denote by $ H_{0,a} = H_0 -ia( \partial_sg + g \partial_s) + ga^2$. For    $\varphi \in  D(H) $, we have   
\begin{equation} \label{IN} \Re ( H_{0,a} \varphi, \varphi) = H_{0}  
- g( \Im a)^2 \geq \lambda_0  - g_\infty( \Im a)^2; \;  g_\infty= \Vert g \Vert_\infty.
\end{equation}
Then  for $ z \notin  \Sigma_a= \{ z \in \mathbb C, \Re z \geq \lambda_0  - g_\infty ( \Im a)^2 \}$, $\Vert( H_{0,a}- z)^{-1} \Vert \leq {\rm dist}^{-1 }(z,  \Sigma_a)$ \cite{Ka}.
 Thus if we show that $ V_0 ( H_{0,a}- z)^{-1}$ is compact, then by using usual arguments of the perturbation theory (see e.g; the proof of the Proposition \ref{t1}) 
 the operator $H_a$ has only discrete spectrum in $ \mathbb C \setminus  \Sigma_a$  this will imply  that  the essential spectrum 
of $ H_a$, $\sigma_{ess} (H_a) \subset  \Sigma_a$.

Let  $h_0= -\partial_s^2  \otimes \mathbb I_{L^2(0,d)}+   \mathbb I_{L^2(\mathbb R)} \otimes T_u $  be the operator introduced in the proof of the Lemma \ref{l2} and  $ G= g-1$ we have
$$V_0( H_{0,a}- z)^{-1}= V_0(h_0 -z)^{-1} - V_0 (h_0 -z)^{-1} \big( \partial_s  G\partial_s +ia( \partial_sg + g \partial_s) - ga^2 \big)( H_{0,a}- z)^{-1}.$$ 
 We know that $V_0(h_0 -z)^{-1}$ is compact (see the proof of the Lemma \ref{l2}), so we are left to show that $\big (  \partial_s  G\partial_s +ia( \partial_sg + g \partial_s) - ga^2 \big)( H_{0,a}- z)^{-1}$ is a bounded operator.
We have
$$ \partial_s  G\partial_s (H_{0,a}- z)^{-1}  = \partial_s  G\partial_s(H_0 -z)^{-1}  + \partial_s  G\partial_s(H_0 -z)^{-1} (ia( \partial_sg + g \partial_s) - ga^2) (H_{0,a}- z)^{-1}. $$
since  $D(H_0) \subset {\mathcal H}_{\rm loc}^2(\bar \Omega) \cap  {\mathcal H}_{0}^1(\Omega)$,  by the closed graph theorem $\partial_s  G\partial_s(H_0 -z)^{-1}  $ is bounded.  By using  similar arguments as  in the proof of the Lemma \ref{k2} $(ii)$, $ (ia( \partial_sg + g \partial_s) - ga^2) (H_{0,a}- z)^{-1}$  is also a bounded operator.

We now  conclude  the proof of the lemma by using usual arguments  \cite{reed simonIV}. 
   If  $   g_\infty ( \Im a)^2 < \lambda_0 -E_0  $, $ E_0$ remains  an  discrete eigenvalue of $H_a$ and $ e^{\Im a s} \varphi  \in L^2(\Omega)$. \qed

\subsection{ Proof of the Proposition \ref{pc1}}

 Let  $E_0$ be   a simple  eigenvalue of $H$, as above we denote by $\varphi_0$ the associated eigenvector and $P= (.,\varphi_0) \varphi_0$.\\
Let  $ \chi_1= \chi_1(s)$ be a $C^\infty$ characteristic function of the interval $[\frac{-\tau}{F}, \frac{\tau}{F}]$,  $ \tau>0$, s.t. $\chi_1(s)=1$ if $s \in [\frac{-\tau}{2F}, \frac{\tau}{2F}]$.
Introduce the following operator on $L^2(\Omega)$,
$$ H_1(F) := H + \chi_1W(F).$$
Since $n_1= \Vert \chi_1W(F) \Vert_\infty  < \infty $   then $H_1(F) $ is a selfadjoint operator on $D(H)$. Note  that $n_1= O(\tau) +O(F)$.\\
By using standard perturbation theory, we can choose  $R>0$ such that the  complex disk $\mathcal{D}=\{ z \in \mathbb C, |z-E_{0}|\leq R \} $ such that $\mathcal{D} \cap \sigma(H) = \{E_0\}$  and  has a boundary ${\partial\mathcal{D}} \subset \rho(H_1(F)$ for $ \tau$ and $F$ small enough. Then 
 \begin{equation} \label{P1}
P_1=P_1(F)=\frac{1}{2i\pi}\oint_{\partial\mathcal{D}}(H_1(F)-z)^{-1}\,dz.
\end{equation}
is  an   spectral projector  for $H_1(F)$ satisfying 
\begin{equation} \label{P11}
 \lim_{\tau \to 0, F\to 0} \Vert P_1-P\Vert \to 0.
 \end{equation}
Hence for $ \tau$ and $F$ small enough,  the operator $ H_1(F)$ has one  eigenvalues near $E_0$,  $e_0(F)$ and  $ \vert E_0-e_0(F)\vert = O(\tau) +O(F)$. Denote by $\psi_0$ the associated eigenvector. Evidently $P_1 \psi_0=\psi_0$.

 Let us show that as a consequence of the Lemma  \ref{CT},   if $F$  and $\tau$ are  small enough then  $e^{a \vert s \vert } \psi_0 \in L^2( \Omega)$ and 
 \begin{equation}\label{E} 
\Vert  e^{a \vert s \vert } \psi_0 \Vert \leq C
 \end{equation}
   where  the constant $C>0 $ and it is  independent of  $F$.\\ 
Introduce the family of operators $H_{1,a} = H_a +  \chi_1W(F)$, where $H_a$ is  defined as in the previous section. Then 
 $\{H_{1,a} , a \in \mathbb C\}$ is an entire family of type A. In the other hand  the spectrum of $H_{1,a}$ satisfies,
 $\sigma(H_{1,a}) \subset \{z \in \mathbb C,dist (z, \sigma(H_a) \leq n_1 \}$.

We have  
\begin{equation} \label{P2}
 (\varphi_0, \psi_0)  e^{as} \psi_0 =  \frac{1}{2i\pi}\oint_{\partial\mathcal{D}} e^{a s }(H_1(F)-z)^{-1} e^{-a  s }  e^{a s} \varphi_0\,dz.
 \end{equation}
For $\tau$ and $F$ small enough,   the resolvent $(H_{1,a}(F)-z)^{-1}$ is well defined for any  $ z \in \partial\mathcal{D}$. Further,  the resolvent identity
$$ (H_{1,a}(F)-z)^{-1}= (H_{a}-z)^{-1}-  (H_{a}-z)^{-1}\chi_1W(F)(H_{1,a}(F)-z)^{-1}$$
 and the fact that  $ \Vert (H_{a}-z)^{-1} \Vert $ is uniformly bounded  in $z \in  \partial\mathcal{D}$ imply that     $ \Vert(H_{1,a}(F)-z)^{-1} \Vert$    is uniformly bounded  in $z \in  \partial\mathcal{D}$ w.r.t. $\tau$ and $F$.\\
Moreover by using standard arguments,
 in the bounded operator sense $(H_{1,a}(F)-z)^{-1} =  e^{a s }(H_1(F)-z)^{-1} e^{-a  s } $ for  $z \in  \partial\mathcal{D}$.
 In the other hand we can check that    $\vert (\varphi_0, \psi_0) \vert \geq \frac{1}{2}$ if 
   $F$  and $\tau$ are  chosen small enough. 
 Hence by using  the Lemma \ref{CT} and \eqref{P2},  there exists $C>0$ independent of $\tau $ and $F$  such that 
 \begin {equation} \nonumber \Vert e^{as} \psi_0 \Vert  \leq C \Vert e^{as} \varphi_0 \Vert  < \infty.
\end{equation}
The same arguments can be applied with $a$ changing in $-a$, proving our claim. \\
From now, we fix $\tau > 0$ and  we choose $0<F<F_0$ where $F_0$ is  small enough  such that \eqref{E} also holds.

Let  $ \theta = i \beta$,  $0< \beta< \theta_0$. As in previous section $P_1=P_1(F), P_\theta= P_\theta(F)$ are  the spectral projectors of $ H_1=H_1(F), H_\theta= H_\theta (F)$  associated  respectively  to the eigenvalue $e_0, Z_0$.  We have

\begin{align}\nonumber (Z_0-e_0) (P_\theta\psi_0, P_1\psi_0)=  ((H_\theta - H_1)P_\theta\psi_0, P_1\psi_0)=& \\(( \theta    F\cos(\eta) f + (1-\chi_1) W(F) &+ \Delta T P_\theta \psi_0, P_1\psi_0)   \nonumber
\end{align}
 where  $\Delta T = T_{s,\theta} -T_s $. Hence we will use the estimate, 
 
\begin{equation} \label{diff} \vert \Im Z_0 \vert \leq \frac {1}{(\vert P_\theta\psi_0, P_1\psi_0)\vert} 
\vert ( \theta    F\cos(\eta) f + (1-\chi_1) W(F)  + \Delta T P_\theta \psi_0, P_1\psi_0)\vert  
\end{equation}
 By using  \eqref{dP},  \eqref{P11}, for $F$  and $\tau $ small enough,  the l.h.s. of \eqref{diff} is estimated as, 
 $$\vert (P_\theta\psi_0, P_1\psi_0) \vert \geq  \frac {1}{2},$$
and from   \eqref{E},   the  two first terms of the r.h.s. of \eqref{diff} satisfy
$$ \vert (\theta F\cos(\eta) fP_\theta \psi_0, \ P\psi_0) \vert \leq    \vert \theta \vert  \Vert \Phi \psi_0 \Vert = O(e^{-\frac{c}{F}})$$ 
and
 $$\vert (1-\chi_1) W(F)P_\theta \psi_0, \ P\psi_0) \vert \leq    \Vert (1-\chi_1) W(F)\psi_0 \Vert = O(e^{-\frac{c}{F}}), $$
for  some  constant $c>0$.
 Let  $\chi$ be a characteristic function of  ${\rm supp}(f')$.   Then  (see e.g  \eqref{Ts2} and \eqref{E1}),
$$ \vert ( \Delta T  P_\theta\psi_0,P \psi_0) \vert  = \vert ( \Delta T  P_\theta\psi_0, \chi P \psi_0) \vert  \leq 
\Vert \chi  \psi_0 \Vert  \Vert  \Delta T  P_\theta\psi_0\Vert . $$
Since  for $ F$  small enough, $\Vert \chi  \psi_0 \Vert  = O(e^{-\frac{c}{F}})$. Then  to prove the theorem we need  to  show that funder our conditions, $ \Vert  \Delta T  P_\theta\varphi_0\Vert$ and  then by
\eqref{Pt}  that  $ \Vert  \Delta T  (H_{\theta}(F)-z)^{-1}\Vert,\; z \in \partial\mathcal{D}$
 is   uniformly  bounded  w.r.t. 
$F$.

Note that  following the proof of the   Proposition \ref{pc}, (see e.g.
\eqref{1+k} and \eqref{DR}) then   for $F$ small enough, the norm $\Vert (H_{\theta}(F)-z)^{-1}\Vert, z \in \partial \mathcal D$ is uniformly bounded in $ F$. Evidently this is  also true  for  $\Vert (H-z)^{-1}\Vert$.
The  second resolvent  equation implies  for $F$ small and $z \in  \partial \mathcal D$,
\begin{eqnarray} \label{1}
& \Delta T  (  H_{\theta}(F)-z)^{-1} = \\& \Delta T  (  H-z)^{-1} -  \Delta T  (H-z)^{-1}\big( \Delta T  + W_\theta(F)  \big)(  H_{\theta}(F)-z)^{-1}. \nonumber
\end{eqnarray}

By the closed  graph theorem  the operator $ \Delta T(H-z)^{-1}, z \in \partial{\mathcal D}$ is bounded  and if  $F$  is assumed small enough  $ \Vert\ \Delta T  (H-z)^{-1}  \Vert <\frac{1}{2} $  uniformly in $ z \in  \partial \mathcal D$ (see e.g.  the proof of the Theorem \ref{aly ht}). In view of 
\begin {eqnarray}
& \Delta T  (H-z)^{-1} W_\theta(F) (  H_{\theta}(F)-z)^{-1} =   \\
& \Delta T   (Fs +i)(H-z)^{-1}  \frac {W_\theta(F)}{Fs  +i}  (  H_{\theta}(F)-z)^{-1} + \nonumber \\
& F \Delta T   (H-z)^{-1} ( g\partial_s + \partial_sg) (H-z)^{-1} \frac {W_\theta(F)}{Fs +i}  (  H_{\theta}(F)-z)^{-1}, \nonumber
 \end{eqnarray}
  the same arguments already used in the   Section 3,  then imply that   there exists a constant $C>0 $ such that for $F$ small enough 
 $ \Vert  \Delta T  (H-z)^{-1} W_\theta(F)  (  H_{\theta}(F)-z)^{-1} \Vert   \leq C$  for 
 $z \in  \partial \mathcal D$.
Therefore, by 
 \eqref{1}, we get  for   $ z \in  \partial \mathcal D$,
\begin{eqnarray} \label{2}
& \Vert \Delta T  (  H_{\theta}(F)-z)^{-1}  \Vert  ( 1- \Vert  \Delta T  (  H-z)^{-1}\Vert ) \leq \Vert   \Delta T (  H-z)^{-1}\Vert +  \nonumber \\ &  \Vert  \Delta T  (H-z)^{-1} W_\theta(F)(  H_{\theta}(F)-z)^{-1} \Vert, \nonumber
\end{eqnarray}
hence we get
$$ \Vert  \Delta T  (  H_{\theta}(F)-z)^{-1}  \Vert \leq   1 + 2 C. $$ 

\qed


\section {Appendix: Self-adjointness}
In this section we prove the Theorem \ref{2.1}.
Our proof     {\it }is  mainly  based  on  the commutator  theory \cite{reed simon, Rob}.  First we note that it is sufficient to show  the theorem for the operator 
$ h=h(F) = H_0 + w(F) $ defined on $L^2(\Omega)$  where $w(F)$ is defined in \eqref{w}.
  Choose $a,b \in \mathbb R^+$ such that  
$  w(F,s)  + as^2 +b >1$  and consider the positive  symmetric operator in $L^2(\Omega)$,
$$N=H_0   + w(F) + 2as^2 +b.$$
The operator N     admits a (Friedrichs) self-adjoint extension  since it is associated with a positive quadratic form, we denote its self-adjoint extension  by the same symbol \cite{Ka}.
Moreover $ N $ has compact resolvent and then only discrete spectrum (see section \ref{sec} below).  So  $N$ is essentially self-adjoint 
on 
\begin{equation} \label {Co}
 {\mathcal C}=\{\varphi=\psi_{\mid \bf\Omega}: \psi\in\mathrm{S}(\mathbb{R}^{2}), \psi(s,0) = \psi(s,d) =0\, \, \mbox{for all}\,\,s\in\mathbb{R}\} 
 \end{equation} 
where $\mathcal{S}(\mathbb{R}^{2})$ denotes the Schwartz class. 
In fact   $ {\mathcal C}$ contains  a complete set  of eigenvectors of $N$. Indeed some standard arguments (see e.g. \cite{Ag, GaYs,reed simonIV}) show that the  corresponding eigenfunctions  and their derivatives  are smooth on $\bar \Omega$ and  super-exponentially decay in the longitudinal direction. From  \cite [X.5]{reed simon}  we  have  to check  that there exist $c,d >0$ such that  for all $\varphi\in  {\mathcal C}$, $\Vert  \varphi\Vert = 1$,
  \begin{equation}
  \label{H1}
 c \|N \varphi\| \geq  \|h\varphi\|
  \end{equation}
  and 
    \begin{equation}
  \label{H2}
d\|N^{\frac{1}{2}}\varphi\|^{2} \geq  |(h\varphi,N\varphi)-(N\varphi,h\varphi)|.
  \end{equation}
In the  quadratic forms sense on $ {\mathcal C}$,
\begin{eqnarray}
N^{2} &=&(h+b)^{2}+ 4a sN s  +  [[h,s],s].
\end{eqnarray}
But in the form sense on $ {\mathcal C}$, $ [[h,s],s] =-2g $ and g is bounded function.
Therefore,  
$$ \Vert  N \varphi \Vert  +2 \Vert g\Vert_\infty \geq  \Vert (h+b)\varphi \Vert$$
 and then since $ N\geq 1$ this last inequality implies  \eqref{H1}.
Similarly,
\begin{eqnarray}
\pm i [h,N]&=&\pm i [ h-N,N] = \pm i2a[s^{2},T_s]\notag\\
&=& \mp i4a(\partial_{s}g s + s g\partial_{s})\notag,
\end{eqnarray}
this gives that for all $\varphi\in  {\mathcal C}, \Vert  \varphi\Vert = 1$, 
\begin{equation} \label{com}
|(h\varphi,N\varphi)-(N\varphi,h\varphi)|\leq 2a
( \Vert g^{\frac{1}{2}}\partial_{s}\varphi\Vert^2 + \Vert  s g^{\frac{1}{2}} \varphi \Vert^2).
\end{equation}
 Clearly  we have  $ N \geq T_s + as^2 $ on $ {\mathcal C}$. Then from  \eqref{com} there exists a constant $d>0$
 such that 
$$
|(h\varphi,N\varphi)-(N\varphi,h\varphi)|\leq d (N\varphi, \varphi)
$$
proving (\ref{H2}). 

We now show  {\it(ii)}.  Let $E\in \mathbb{R}$.   We denote by   $ \tilde E_1$  the first eigenvalue of the operator $T_u+F\sin(\eta)u$ and $\tilde \chi_{1}$ the associated normalized eigenvector, 
\begin{equation}
(T_u + F\sin(\eta)u)\tilde \chi_{1}(u)=\tilde E_{1} \tilde \chi_{1}(u).
\end{equation}
Set $ \lambda = E- \tilde E_1$ and $\varphi$ be the solution of the Airy equation 
\begin{equation}
-\varphi''(s) + F\cos(\eta) s \varphi(s)= \lambda \varphi(s)\; \; \lambda\in\mathbb{R}.
\end{equation}
It is known  (see e.g. \cite{AbSz} )   that $\varphi(s)=  (\lambda- F\cos(\eta)s)^{-1/4} e^{-i \frac{2}{3F\cos(\eta)} (\lambda- F\cos(\eta)s)^{3/2}} + o((\lambda- F\cos(\eta)s)^{-1/4})$ and  $\varphi'(s)= (\lambda- F\cos(\eta)s)^{1/4} e^{-i \frac{2}{3F\cos(\eta)} (\lambda- F\cos(\eta)s)^{3/2}} + o((\lambda- F\cos(\eta)s)^{1/4})$ as $s \to  - \infty$. \\
 Let  $\xi$ be a  $ \mathrm{C}^{\infty}$ characteristic  function of $ (-1,1)$ and  $ s \in \mathbb R \to \xi_n(s) = \xi( \frac{s}{n^{\alpha}}+ n)$,  $ \frac{1}{2}< \alpha< 1, n \in \mathbb N ^*$. Set 
 
$\psi_{n}=\frac{\widetilde{\psi}_{n}}{\|\widetilde{\psi}_{n}\|}$ where $\widetilde{\psi}_{n}(s,u)=\tilde \chi_{1}(u)\varphi(s)\xi_n(s) $, then for $n$ large enough, $\Vert \tilde  \psi_n\Vert = \Vert \varphi \xi_n \Vert \geq c\;n^{ \alpha/2-1/4} $ for some  constant
$c>0$.
 Since $g=1$ if $n$ is large, we have
\begin{eqnarray}
({H}(F)-E)\psi_{n}=\big( -2\chi_{1}(u)\varphi'(s)\xi_{n}'(s)- \chi_{1}(u)\varphi(s)\xi_{n}''(s)\big)\frac{1}{\|\widetilde{\psi}_{n}\|}\notag
\end{eqnarray}
and then 
\begin{equation}
\|({H}(F)-E)\psi_{n}\|_{L^{2}(\Omega)}\leq  \frac{1}{\|\widetilde{\psi}_{n}\|} \big(
 2\|\varphi'\xi_{n}'\|_{L^{2}(\mathbb{R})}+  \|\varphi\xi_{n}''\|_{L^{2}(\mathbb{R})} \big).
\end{equation}
For $n$ large enough $\|\varphi'\xi_{n}'\|^{2}_{L^{2}(\mathbb{R})}=o(n^{-\alpha/2 +1/4})$ 
and $\|\varphi\xi_{n}''\|_{L^{2}(\mathbb{R})}=o(n^{-3\alpha/2-1/4 })$. Thus,
\begin{equation*}
\lim_{n\rightarrow \infty }\|({H}(F)-E)\psi_{n}\|_{L^{2}(\Omega)}=0.
\end{equation*}
This completes the proof.

\qed
\subsection{The operator $(N+1)^{-1}$} \label{sec}
Consider first  the  positive self-adjoint operator  on $ L^2(\Omega)$
$$ N_0  = (-\partial_s^2 + v(s)) \otimes \mathbb I_{L^(0,d)} +   \mathbb I_{L^(\mathbb R)} \otimes T_u  $$
where $ v(s) = w(F, s) + 2as^2 +b$ and $w$ is defined in \eqref{w}. It is known that 
the operator $-\partial_s^2 + v(s)$ is essentially self-adjoint on $L^2(R)$ and has a compact resolvent \cite {reed simon, reed simonIV}. By the min-max principle we can verify that 
the eigenvalues of this operator  satisfy,  there exists $ c_1, c_2 >0 $ such that  for large 
$n \in \mathbb N$
$$ c_1n  \leq e_n \leq c_2 n. $$  
Then   $ (N_0 +1)^{-1} $ is an Hilbert-Schmidt operator.   
By using the second resolvent equation we have 
$$ (N +1)^{-1}  = (N_0 +1)^{-1}  + (N_0 +1)^{-1} \partial_s  G\partial_s (N +1)^{-1} $$
where $ G$ is defined in the proof of the Lemma \ref{l2}.
Therefore,  the statement follows if we show that $ \partial_s  G\partial_s (N +1)^{-1} $
is a bounded operator.

We have 
$$ \partial_s  G\partial_s (N +1)^{-1}  = \partial_s  G\partial_s(H_0 +1)^{-1}  - \partial_s  G\partial_s(H_0 +1)^{-1} v (N +1)^{-1}. $$
Since $D(H_0) \subset {\mathcal H}_{ \rm loc}^2(\bar \Omega) \cap  {\mathcal H}_{0}^1(\Omega)$, by the closed graph theorem $\partial_s  G\partial_s(H_0 +1)^{-1}  $ and   $\partial_s  G\partial_s s(H_0 +1)^{-1}  $  are bounded.  
 Standard commutation relations then imply,
\begin{eqnarray*}
 &\partial_s  G\partial_s(H_0 +1)^{-1}v (N +1)^{-1} = \partial_s  G\partial_s (s+i)(H_0 +1)^{-1} \frac{ v}{s+i}  (N +1)^{-1}+ \\ & \partial_s  G\partial_s(H_0 +1)^{-1}  2 \partial_s (H_0 +1)^{-1}
\frac{ v}{s+i} (N +1)^{-1}. 
\end{eqnarray*}
We know that the domain  $ D(N) \subset D( \vert v \vert^{1/2})$  so $\frac{ v}{s+i} (N +1)^{-1}$
is bounded, then  it follows by using the same arguments as above  that  $\partial_s  G\partial_s(H_0 +1)^{-1}v (N +1)^{-1}$ is also bounded.
\qed
 \bigskip 

{\bf Acknowledgments} 
M. Gharsalli  would like to thanks   the Centre de Physique Th\' eorique-CNRS  for  the warm welcome  extended  to her during   her visit and where this  present work was done.  The authors thank H.  Najar   who has pointed out to us this problem.

\newpage

\end{document}